\DeclareFontFamily{OT1}{pzc}{}
\DeclareFontShape{OT1}{pzc}{m}{it}{<-> s * [1.10] pzcmi7t}{}
\DeclareMathAlphabet{\mathpzc}{OT1}{pzc}{m}{it}
\numberwithin{equation}{subsection}
\newtheorem{thmA}{Theorem}
\newtheorem{theorem}{Theorem}[subsection]  
\newtheorem{lemma}[theorem]{Lemma} 
\newtheorem{proposition}[theorem]{Proposition}
\newtheorem{corollary}[theorem]{Corollary}
\theoremstyle{remark} 
\newtheorem{definition}[theorem]{Definition}
\newtheorem{remark}[theorem]{Remark}
\newcommand{\Inj}{\mathscr{I}}
\newcommand{\Operad}{\mathscr{E}}
\newcommand{\Mac}{\mathcal{M}}
\DeclareMathOperator*{\argmin}{arg~min} 
\title{An $E_\infty$ structure on the matroid grassmannian}
\author{Jeffrey Giansiracusa}
\email{jeffrey.giansiracusa@durham.ac.uk} 
\date{\today}
\begin{document}
\begin{abstract}
In analogy with the origin of the additive structure of $K$-theory, we construct an $E_\infty$
structure on the matroid Grassmannian (the space of oriented matroids) for which the underlying
binary operation is the direct sum of matroids.  The proof involves lifting the polyhedral fan
structure of the Dressian to a polyhedral model for the matroid Grassmannian, and introducing a
novel $E_\infty$ operad made from the space of infinite subsets of $\mathbb{N}$.
\end{abstract}
\maketitle

In \cite{Gelfand-MacPherson}, Gelfand and MacPherson found a long-sought combinatorial formula for
the rational Pontrjagin classes of a triangulated manifold. Their formula made essential conceptual
use of oriented matroids \cite{Oriented-matroids-book}. Motivated by this breakthrough, MacPherson
\cite{MacPherson-CD-manifolds} proposed studying a combinatorial analogue of the category of smooth
manifolds that he called \emph{combinatorial differential manifolds} or CD-manifolds.  In this
category, the tangent bundle theory is controlled by the \emph{matroid Grassmannian}, which is the
nerve of the poset of oriented matroids of fixed rank and on a fixed ground set. For this reason,
the matroid Grassmannian is sometimes denoted $\mathit{MacP}(d,E)$ ($d$ being the rank and $E$ being
the ground set). MacPherson observed that when $d=1,2, |E|-1$, or $|E|-2$, the matroid Grassmannian
is homeomorphic to the familiar Grassmannian $\mathit{Gr}(d,E)$ of $d$-dimensional subspaces of
$\mathbb{R}^E$; outside of these cases, the homotopy type was a mystery, and understanding it would
have important implications for the theory of characteristic classes.

A decade later, Biss claimed a proof that the matroid Grassmannian is homotopy equivalent to the
real Grassmannian in all cases \cite{Biss-Annals}.  Unfortunately, the argument contained a fatal
flaw and the paper was retracted \cite{Biss-Annals-retraction, Mnev-error}.  A related claim
\cite{Biss-complex} of a combinatorial matroid-based model for the complex Grassmannian suffered
from the same mistake and was also withdrawn \cite{Biss-complex-erratum}. Since then, the problem of
understanding the homotopy type of the matroid Grassmannian remains largely open, although Anderson
and Davis have proved various results illuminating this question and the theory of CD-manifolds in
\cite{Anderson-matroid-bundles,Anderson-homotopy,Anderson-mod2,Anderson-hyperfields}.  In
particular, they showed that the comparison map
\[
	\mathit{Gr}(d,E) \to \mathit{MacP}(d,E)
\]
is a split surjection on mod 2 cohomology.

The purpose of this paper prove a theorem saying that at least one important part of the homotopical
structure of the real Grassmannains is also carried by the matroid Grassmannians: the $E_\infty$
structure corresponding to direct sum.  Direct sum induces maps
\[
	\mathit{Gr}(d_1,E_1) \times \mathit{Gr}(d_2,E_2) \to \mathit{Gr}(d_1 + d_2, E_1 \sqcup E_2)
\]
that yield an $E_\infty$ structure on $\coprod^\infty_{d=0} \mathit{Gr}(d,\mathbb{N})$.  This structure
underlies the additive structure of real $K$-theory.  

\begin{thmA}
The disjoint union $\coprod_{d=0}^\infty \mathit{MacP}(d,\mathbb{N})$ carries an action of an
$E_\infty$ operad extending the direct sum of oriented matroids, and hence its group completion has
the homotopy type of an infinite loop space.
\end{thmA}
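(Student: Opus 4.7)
The plan has three ingredients, following the abstract. \emph{First}, I would replace the nerve-of-a-poset description of $\mathit{MacP}(d,E)$ by a polyhedral model in which direct sum is a map of polyhedral complexes rather than merely an order-preserving map of posets. The natural candidate comes from lifting the fan structure of the Dressian (whose cones correspond to matroid subdivisions of the hypersimplex) to the oriented setting by attaching sign data on each cone; the face poset of the resulting polyhedral space should recover the poset of oriented matroids, while its topology remembers enough polyhedral geometry that direct sum becomes continuous and piecewise linear on cells.

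\emph{Second}, I would introduce the operad $\Inj$ whose arity-$n$ space $\Inj(n)$ consists of ordered $n$-tuples $(S_1,\ldots,S_n)$ of pairwise disjoint infinite subsets of $\mathbb{N}$, with a suitable topology. Operadic composition transports partitions of $\mathbb{N}$ into each $S_i$ through the canonical order-preserving bijection $\mathbb{N} \cong S_i$, and the symmetric group acts freely by permuting entries. The action on $\coprod_d \mathit{MacP}(d,\mathbb{N})$ sends $(S_1,\ldots,S_n)$ and oriented matroids $M_i$ of rank $d_i$ to the direct sum $M_1 \oplus \cdots \oplus M_n$, regarded as an oriented matroid on $\bigcup_i S_i \subseteq \mathbb{N}$ and extended trivially on the complement. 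Contractibility of $\Inj(n)$, which delivers the $E_\infty$ condition, should follow from a shift-to-infinity homotopy deforming an arbitrary disjoint family into a canonical one such as the arithmetic progressions $i + n\mathbb{N}$, interpolated finitely many coordinates at a time.

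\emph{Third}, once the action is defined and the operad axioms verified (associativity, equivariance, and unit all reduce to elementary properties of direct sum under restriction along injections), the infinite loop space conclusion is immediate from May's recognition principle. The main obstacle I foresee lies in choosing the topology on $\Inj(n)$: it must be simultaneously fine enough that the shift-to-infinity contraction is continuous and the $\Sigma_n$-action is free in a useful sense, yet compatible with the polyhedral model so that the action map $\Inj(n) \times \prod_i \mathit{MacP}(d_i,\mathbb{N}) \to \mathit{MacP}(\textstyle\sum_i d_i,\mathbb{N})$ is continuous. This compatibility is precisely why the polyhedral model of the first step is needed: in the nerve topology alone, a continuous variation of the subsets $S_i$ is invisible to the combinatorial action, so the polyhedral refinement is what makes the whole construction hang together.
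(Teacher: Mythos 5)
Your proposal correctly identifies the three main ingredients (a polyhedral replacement for the nerve model, an operad built from injective embeddings into $\mathbb{N}$, and May's recognition principle), and your closing paragraph correctly intuits \emph{why} the polyhedral model is indispensable. However, there is a genuine gap exactly where you flag the ``main obstacle'': you never actually produce the topology on $\Inj(n)$, nor the continuous action over it. As a bare set of $n$-tuples of disjoint infinite subsets, $\Inj(n)$ has no natural connected, contractible topology --- subsets of $\mathbb{N}$ with the product topology form a Cantor set, so ``shift-to-infinity, interpolated finitely many coordinates at a time'' has no meaning as a path. Your description of the action (take $(S_1,\dots,S_n)$ and $M_i$, form the direct sum on $\bigcup_i S_i$) is only a point-set formula at ``vertex'' configurations; it says nothing about what happens at an interpolating point, which is precisely where the topology and the $E_\infty$ structure live.

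The paper resolves this by changing what the operad is. Rather than a topologized set of tuples, $\Operad(A)$ is taken to be a genuine \emph{simplicial complex} $\Inj(A\times\mathbb{N})$: vertices are injective maps whose image lies in one piece of a fixed partition $\mathbb{N}=P_0\cup P_1\cup\cdots$, and a set of vertices spans a simplex iff the images are pairwise disjoint. Your proposed tuples correspond roughly to the $0$-skeleton; the higher simplices are exactly the homotopies you were trying to invent with a custom topology. Contractibility then comes cheaply: any compact subcomplex involves only finitely many pieces $P_i$, so one can cone it off by a fresh injection into an unused piece --- no shift-to-infinity argument needed. The deeper missing ingredient in your sketch is the \emph{matroid sliding} formula: for a point $\sum_i t_i\alpha_i$ in the interior of a simplex and an oriented tropical Pl\"ucker vector $\Phi$, the paper defines $\Phi^A_t$ by multiplying Pl\"ucker coordinates by monomials $t_0^{b_0}\cdots t_n^{b_n}$ in the barycentric coordinates, and proves (Proposition~\ref{prop:matroid-sliding}) that the result still satisfies the oriented tropical Pl\"ucker relations. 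This is the technical heart of the action map, and it is why the Euclidean-topologized space $\Mac(d,E)\subset\mathbb{P}(\mathbb{R}^{\binom{|E|}{d}})$ of Theorem~B, rather than the abstract poset nerve $\mathit{MacP}(d,E)$, must be used: the formula literally multiplies real numbers. Your sketch gestures at this dependency but does not supply the construction that realizes it.
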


Our method is to replace the matroid Grassmannian $\mathit{MacP}(d,E)$ with a more geometric cell
complex $\Mac(d,E)$ living inside the projective space  $\mathbb{P}(\mathbb{R}^{\binom{|E|}{d}})$ and on
which we are able to give an explicit and simple formula for the operad action. The complex
$\Mac(d,E)$ is the analogue for oriented matroids of the Dressian that parametrizes valuated
matroids in tropical geometry, and it can be realized as the Grassmannian over the hyperfield of
signed tropical numbers in the sense of \cite{Baker-Bowler-partial-hyperstructures}. Anderson and
Davis proved in \cite{Anderson-hyperfields} that $\Mac(d,E)$ and $\mathit{MacP}(d,E)$ are weakly
equivalent when the former is given a topology induced by a topology on $\mathbb{R}$ slightly
different from the usual one (motivated by the signed tropical hyperfield structure, the 0-coarse
topology agrees with the usual topology away from 0, but the only neighbourhood of 0 is the whole
line).

We instead work with the Euclidean-induced topology on $\Mac(d,E)$ and show that it is a polyhedral
complex when viewed in logarithmic coordinates. More precisely, the group homomorphism
$(\mathbb{R},\times) \to \mathbb{T} = (\mathbb{R} \cup \{\infty\}, +)$ given by $x\mapsto -\log |x|$
induces a map of projective spaces
\[
\mathbb{P}(\mathbb{R}^{\binom{|E|}{d}}) \to \mathbb{P}(\mathbb{T}^{\binom{|E|}{d}}),
\]
where the object on the right, often referred to as tropical projective space, is a polyhedral
compactification of the Euclidean space $\mathbb{R}^{\binom{|E|}{d}-1}$ that is combinatorially
equivalent to a simplex.  We prove:
\begin{thmA}
The space $\Mac(d,E) \subset \mathbb{P}(\mathbb{R}^{\binom{|E|}{d}} )$ is a CW complex such
that the map $\mathbb{P}(\mathbb{R}^{\binom{|E|}{d}}) \to \mathbb{P}(\mathbb{T}^{\binom{|E|}{d}})$ 
sends each cell homeomorphically onto a convex polyhedron.
\end{thmA}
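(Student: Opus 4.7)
The plan is to lift the known polyhedral cell structure on the Dressian $\mathit{Dr}(d,E) \subset \mathbb{P}(\mathbb{T}^{\binom{|E|}{d}})$ to a CW structure on $\Mac(d,E)$ by stratifying according to sign patterns. The key analytic observation is that on each open orthant of $\mathbb{R}^{\binom{|E|}{d}}$, the map $x \mapsto -\log|x|$ restricts to a diffeomorphism onto $\mathbb{R}^{\binom{|E|}{d}}$, which descends to a diffeomorphism between the corresponding strata of the two projective spaces.

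The cells will be indexed by pairs $(\mathcal{O}, \sigma)$, where $\mathcal{O}$ is a rank-$d$ oriented matroid on $E$ and $\sigma$ is an open cell of the Dressian of the underlying matroid $M(\mathcal{O})$. The open cell $C_{\mathcal{O},\sigma} \subset \Mac(d,E)$ is the set of signed Plücker vectors (up to scaling) whose support equals the basis set of $M(\mathcal{O})$, whose sign pattern realizes the chirotope of $\mathcal{O}$, and whose tropicalization lies in $\sigma$. Fixing a gauge (say, choosing one distinguished basis coordinate to be positive) identifies the Plücker coordinates on $C_{\mathcal{O},\sigma}$ with positive reals supported on the basis set, and the log map then restricts to a homeomorphism from $C_{\mathcal{O},\sigma}$ onto $\sigma$. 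Since $\sigma$ is by construction a relatively open convex polyhedron in tropical projective space, this simultaneously gives the cell description and the homeomorphism-onto-a-polyhedron statement.

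Verifying the CW axioms amounts to analyzing two types of degeneration in the boundary of a closed cell $\overline{C}_{\mathcal{O},\sigma}$. Tropical degenerations send $\sigma$ to a face in its closure within the Dressian of $M(\mathcal{O})$, corresponding to a refinement of the matroid subdivision while the oriented matroid is preserved. Matroid degenerations occur when additional Plücker coordinates vanish, so their tropical values become $\infty$ and the supporting matroid becomes a proper specialization $M'$ of $M(\mathcal{O})$, on which the surviving signs define an oriented matroid $\mathcal{O}'$ by restriction. In either case the limit lands in a strictly lower-dimensional cell of the same kind. Since the number of oriented matroids on $E$ is finite and each Dressian has finitely many cells, the result is a finite CW decomposition of the compact space $\Mac(d,E)$, and the weak topology coming from the cell structure matches the subspace topology from projective space.

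The main obstacle is proving that this combinatorial stratification really does describe the Euclidean topology on $\Mac(d,E)$. Two compatibilities have to be verified. First, one must show that the signed Plücker relations (in the sense of Baker--Bowler for the signed tropical hyperfield), restricted to a fixed gauge, become a system of tropical equations cutting out exactly $\sigma$; this reduces to comparing three-term signed relations to their tropical analogues, where sign agreement forces the minimum in each relation to be attained at terms of opposite signs. Second, one must show that any Euclidean limit of points in $C_{\mathcal{O},\sigma}$, possibly with coordinates vanishing, yields a signed Plücker vector whose surviving signs define a valid oriented matroid on the supporting minor. This second point is the genuinely new ingredient beyond the classical Dressian theory, and it should follow from the stability of the signed Plücker axioms under taking supports, a formal consequence of the hyperfield framework of Anderson and Davis cited in the introduction.
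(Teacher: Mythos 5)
Your proposal takes essentially the same route as the paper: cells are indexed by an oriented matroid together with a compatible Dressian cone (equivalently, a chirotope $\chi$ and an initial datum $I$), and the coordinate-wise $-\log|\cdot|$ map, restricted to the fixed sign orthant determined by $\chi$, identifies each such cell with the corresponding Dressian cone. The two compatibility checks you flag at the end are precisely what the paper dispatches with Lemma~\ref{lem:chirotope-compatible-across-cells} (compatibility of a chirotope with a tropical Pl\"ucker vector depends only on the pair $(p,I)$, so $D(\chi,I)$ is carried homeomorphically onto $C(p,I)$) together with the observation that the closure description of $C(p,I)$ in $\widetilde{\mathit{Dr}}(d,E)$ lifts to describe the closure of $D(\chi,I)$.
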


\begin{thmA}
The $\Mac(d,E)$ (with the subspace topology) is homotopy equivalent
to the matroid Grassmannian $\mathit{MacP}(d,E)$.
\end{thmA}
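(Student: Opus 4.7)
The strategy is to leverage the polyhedral CW structure of Theorem B and identify the face poset of $\Mac(d,E)$ with the poset $\mathcal{O}(d,E)$ of rank $d$ oriented matroids on $E$ whose nerve is, by definition, $\mathit{MacP}(d,E)$.

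First I would verify that the CW structure of Theorem B is regular. Each closed cell is sent homeomorphically onto a closed convex polyhedron in $\mathbb{P}(\mathbb{T}^{\binom{|E|}{d}})$, and closed convex polyhedra are homeomorphic to closed balls with spherical boundary, so the attaching map extends to a homeomorphism on each closed cell. Hence $\Mac(d,E)$ is a regular CW complex.

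Next I would identify its face poset with $\mathcal{O}(d,E)$. Every point of $\Mac(d,E)\subset \mathbb{P}(\mathbb{R}^{\binom{|E|}{d}})$ has a well defined sign pattern of Plücker coordinates (up to global sign), which is an oriented matroid chirotope. Because each open cell maps homeomorphically onto the relative interior of a tropical polyhedron and the logarithmic map $\mathbb{R} \to \mathbb{T}$ collapses the sign orbits, the sign pattern is constant on each open cell, giving a bijection between the cells of $\Mac(d,E)$ and the elements of $\mathcal{O}(d,E)$. Passing to the closure of a cell corresponds on the Plücker side to letting certain coordinates tend to $0$, which is precisely the specialization relation that defines the poset order on $\mathcal{O}(d,E)$.

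Finally I would invoke the classical theorem that the barycentric subdivision of a regular CW complex is canonically homeomorphic to the geometric realization of its face poset. This yields
\[
\Mac(d,E) \simeq |\mathcal{O}(d,E)| = \mathit{MacP}(d,E),
\]
as required. The main obstacle is the second step: one must verify that topological closure of cells in $\mathbb{P}(\mathbb{R}^{\binom{|E|}{d}})$ introduces no spurious limits beyond oriented matroid specializations, and conversely that every such specialization is realized as a topological limit of points from the corresponding open cell. Both facts should follow from the explicit description of $\Mac(d,E)$ via signed tropical Plücker relations underlying Theorem B, but their verification is the combinatorial heart of the argument. An alternative route, which I would pursue if the direct identification becomes unwieldy, is to compare the Euclidean topology with the $0$-coarse topology of Anderson--Davis by showing that the continuous identity map $\Mac(d,E)_{\mathrm{Euc}} \to \Mac(d,E)_{0\text{-coarse}}$ is a weak equivalence, using the regularity of the polyhedral cell decomposition.
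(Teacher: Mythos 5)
The central step in your argument is incorrect: there is no bijection between the open cells of $\Mac(d,E)$ and the elements of the poset of oriented matroids. The cells of $\Mac(d,E)$ are indexed by \emph{pairs} $([\chi],I)$, where $[\chi]$ is an oriented matroid and $I$ is a compatible initial datum (the data recording which terms achieve the minimum in each tropical Pl\"ucker relation). Distinct valuated matroids with the same underlying oriented matroid but different initial data lie in different cells. So your proposed face-poset identification $\mathcal{C}\cong\mathcal{O}(d,E)$ simply fails: the map from cells to oriented matroids forgetting the initial datum is surjective but generally far from injective. Because of this, the regular-CW/barycentric-subdivision argument cannot conclude.

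The paper's proof confronts exactly this point. It also begins (in spirit) by reducing to the cell poset $\mathcal{C}$ via the nerve of the open cover by open stars of cells, but then it treats the forgetful map $\pi\colon\mathcal{C}\to\mathit{MacP}(d,E)$, $([\chi],I)\mapsto[\chi]$, as a poset map whose fibres must be shown to be contractible. The geometric fibre over $[\chi]$ is contractible because it has a final object, namely $([\chi], I_{\max}^\chi)$ where $I_{\max}^\chi$ is the maximal compatible initial datum; one then compares the geometric and homotopy fibres and applies Quillen's Theorem A (twice) to deduce that $\pi$ is a homotopy equivalence on nerves. If you want to salvage your approach, you would need to replace the claimed bijection of posets with this kind of fibrewise contractibility argument. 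The alternative route you mention (comparison of the Euclidean and $0$-coarse topologies) is not what the paper does and would require its own substantial argument.
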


The proof of the above homotopy equivalence involves giving an combinatorial description of the
structure of the poset of cells of $\Mac(d,E)$ and then comparing this poset with the poset of oriented matroids.

\subsection*{Acknowledgements}
I would like to thank Felipe Rincon, Ulrike Tillmann, and Irakli Patchkoria for valuable conversations
and encouragement at early stages of this work.  I was partially supported by EPSRC grant EP/R018472/1.

\section{Flavours of matroids}

In this section we briefly recall the definition of matroids, valuated matroids, and oriented
matroids.  We then discuss the cross-breed notion of valuate signed matroids.  We will work with
matroids in terms of their Pl\"ucker vectors.  Our perspective is based on the ideas of Baker and
Bowler \cite{Baker-Bowler-partial-hyperstructures} and Anderson and Davis
\cite{Anderson-hyperfields}.

\subsection{Alternating functions}
Fix a set $E$ and let $V$ be a set with an involution (i.e., an action of the group $\mathbb{Z}/2 =
\{+1, -1\}$) and a distinguished element $0$ fixed by the involution.  A function $f: E^d \to V$ is
\emph{alternating} if
\[f(x_1, \ldots, x_d) = \mathrm{sign}(\sigma) \cdot f(x_{\sigma(1)},  \ldots, x_{\sigma(d)})
\]
 for any permutation $\sigma \in \Sigma_d$, and $f(x_1, \ldots, x_d)=0$ if the arguments $x_i$ are
 not all distinct.

If we choose a total ordering of $E$ then any subset $X$ inherits a total ordering $(x_1,
x_2,\ldots)$. This induces a bijection between alternating functions $f: E^d \to V$ and functions
$g: \binom{E}{d} \to V$, where $\binom{E}{d}$ denotes the set of size $d$ subsets of $E$; a function
$g$ on unordered sets determines an alternating function by $f(x_1, \ldots, x_d) =
\mathrm{sign}(\sigma)\cdot g(\{x_{\sigma(1)},  \ldots, x_{\sigma(d)} \} )$ where $\sigma$ is the
unique permutation such that $x_{\sigma(1)} < \cdots < x_{\sigma(d)}$.  In what follows we will
frequently use this bijection implicitly.

\subsection{Ordinary matroids}
A \emph{matroid} of rank $d$ on a finite ground set $E$ is a nonzero mapping
\[
	p: \binom{E}{d} \to \{0,1\}
\] 
such that for any pair of sets $X \in \binom{E}{d+1}$ and $Y \in \binom{E}{d-1}$, if $p(X-i)$ and
$p(Y\cup i)$ are both 1 for some $i \in X \smallsetminus Y$, then there exists another element $j
\in X \smallsetminus Y$ (with $j\neq i$) for which $p(X-j)$ and $p(Y\cup j)$ are both 1. The
function $p$ is sometimes called a \emph{tropical Pl\"ucker vector}, and its components are the
\emph{Pl\"ucker coordinates}.  The sets $B$ for which $p(B)=1$ are called the \emph{bases} of $p$.

There is a partial order on the set of all matroids where $p \preceq q$ if $p(B) \geq q(B)$ for all
$B$. I.e., in passing from $p$ to $q$, some sets might cease to be bases, but no new bases appear.
When $p \preceq q$ we say that $q$ is a specialization of $p$. The uniform matroid, which is given
by the constant function $p=1$, is the unique minimal matroid with respect to specialization, and
the maximal matroids are those that have only a single basis.

\subsection{Valuated matroids}
Let $\mathbb{T} = \mathbb{R}\cup \{\infty\}$, topologised so that it is homeomorphic to a
half-closed interval; i.e., the map $x \mapsto e^{-x}$ is a homeomorphism to $[0,\infty)$. Given a
function $\phi: \binom{E}{d} \to \mathbb{T}$, the \emph{initial datum} of $\phi$ is the function
$I_\phi$ that sends each pair of distinct sets $(X\in \binom{E}{d+1}, Y \in \binom{E}{d-1})$ to the
set
\[
	\argmin_{i\in X\smallsetminus Y} \big( \phi(X - i) + \phi(Y\cup i) \big) \subset X \smallsetminus Y.
\]
We say that $\phi$ is a \emph{tropical Pl\"ucker vector} of rank $d$ on ground set $E$ if it is not
identically $\infty$ and $I_\phi(X,Y)$ always has cardinality at least 2.  The additive group
$(\mathbb{R},+)$ acts on the set $\{\phi \text{ a tropical Pl\"ucker vector} \}$ via
\[
	(\lambda + \phi)(B) = \phi(B) + \lambda
\]
for $\lambda \in \mathbb{R}$, and a \emph{valuated matroid} is an orbit equivalence class.

Given a tropical Pl\"ucker vector $\phi$, the underlying matroid
$\underline{\phi}$ is defined by
\[
	B \mapsto 
	\begin{cases}
		0 & p(B) = \infty\\
		1 & p(B) \neq \infty.
	\end{cases}
\]
We can think of $\phi$ as the datum of a valuation on the underlying matroid $\underline{\phi}$ that
assigns a real number to each basis. We say that a function $I$ sending each pair $(X,Y)$ to a
subset $I(X,Y) \subset X\smallsetminus Y$ is compatible with a matroid $p$ if there exists a
tropical Pl\"ucker vector $\phi$ with $I=I_\phi$ and $p = \underline{\phi}$.

Let $\widetilde{\mathit{Dr}}(d,E) \subset \mathbb{T}^{\binom{E}{d}} \smallsetminus \{\infty\}$
denote the space of all tropical Pl\"ucker vectors. There is a diagonal action of the additive group
$(\mathbb{R},+)$ on $\mathbb{T}^{\binom{E}{d}} \smallsetminus \{\infty\}$ extending the action on
the set of tropical Pl\"ucker vectors; the quotient is known as tropical projective space
$\mathbb{P}(\mathbb{T}^{\binom{E}{d}})$. It is a manifold with boundary that is diffeomorphic to a
simplex.  The image of $\widetilde{\mathit{Dr}}(d,E)$ in the tropical projective space is the space
of valuated matroids and it is known as the \emph{Dressian} and denoted $\mathit{Dr}(d,E)$.

\subsection{Oriented matroids}
A \emph{chirotope} is a map
\[
	\chi: E^d \to \{-1,0,1,\}
\] 
such that:
\begin{enumerate}
	\item $\chi$ is not identically zero;
	\item $\chi$ is alternating;
	\item Given $X = (x_1, \ldots, x_{d+1}) \in E^{d+1}$ and $Y=(y_1, \ldots, y_{d-1}) \in E^{d-1}$, as $k$ varies from 1 to $d+1$, either the elements
	\begin{equation}\label{eq:chirotope-expression}
	(-1)^k \chi(x_1, \ldots, \widehat{x_k}, \ldots, x_{d+1}) \chi(x_k, y_1, \ldots, y_{d-1})
	 \in \{-1,0,+1\}
	\end{equation}
	are all zero or this list contains both $+1$ and $-1$.
\end{enumerate}
Chirotopes are the Pl\"ucker vectors of oriented matroids.  Let $[\chi]$ denote the equivalence
class of $\chi$ under the identification $\chi \sim -\chi$.  An \emph{oriented matroid} is an
equivalence class $[\chi].$

There is a specialization partial order on the set of chirotopes on $E$ of fixed rank, where
\[
 \chi\preceq \chi' \text{ if $\chi'(B)$ is either $\chi(B)$ or $0$ for each $B\in E^d$},
\]
and this descends to a partial order on oriented matroids.  The \emph{MacPhersonian}
$\mathit{MacP}(d,E)$ is the nerve of the poset of all rank $d$ oriented matroids on $E$.

Since a chirotope $\chi$ is an alternating function, choosing an ordering of $E$ induces an ordering
of each subset $B\subset E$, and hence we may encode $\chi$ as a function $\binom{E}{d} \to
\{-1,0,1\}$. We can thus think of an oriented matroid as an underlying ordinary matroid equipped with the
additional data of a sign for each basis subject to the above conditions.

If $\chi$ is a chirotope, then the absolute value $|\chi|: E^d \to \{0,1\}$ is an ordinary matroid
which we call the \emph{underlying matroid} of $\chi$.  We say that an initial datum $I$ is
\emph{compatible} with $\chi$ if it is compatible with the matroid $|\chi|$ and for all
$(X=\{x_i\},Y=\{y_i\})\in
\binom{E}{d+1} \times \binom{E}{d-1}$, one or the other of the following two conditions holds:
\begin{enumerate}
\item The expression in \eqref{eq:chirotope-expression} is identically zero for all $k$;
\item There exist indices $k_+$ and $k_-$ such that $x_{k_+}, x_{k_-} \in I(X,Y)$ and
\eqref{eq:chirotope-expression} is  $+1$ when $k=k_+$ and $-1$ when $k=k_-$.
\end{enumerate}

\subsection{Oriented valuated matroids}\label{sec:oriented-valuated-matroids}

Oriented valuated matroids are hybrid objects combining oriented matroids and valuated matroids in a compatible way.

\begin{definition}\label{def:chirotope-compatibility}
Given a rank $d$ tropical Pl\"ucker vector $\phi$ on ground set $E$, a chirotope $\chi$ with the same ground set and rank is said to be \emph{compatible} with $\phi$ if:
\begin{enumerate}
\item $\phi(B) = \infty$ if and only if $\chi(B) = 0$;

\item For ordered sets $X=\{x_1, \ldots, x_{d+1}\} \in \binom{E}{d+1}$ and $Y =\{y_1, \ldots, y_{d-1}\}\in \binom{E}{d-1}$, let
$I \subset \{1, \ldots, d+1\}$ denote the set of indices for which
the minimum of the set of the expressions 
\[
	\phi(\{x_1, \ldots, \widehat{x_i}, \ldots, x_{d+1}\}) + \phi(\{x_i, y_1, \ldots, y_{d-1}\})
\]
it attained. Then there exist a pair of elements $i,j \in I$ such that
\[
(-1)^i \chi(x_1, \ldots, \widehat{x_i}, \ldots, x_{d+1})\chi(x_i, y_1, \ldots, y_{d-1})
\]
 and 
 \[
 (-1)^j \chi(x_1, \ldots, \widehat{x_j}, \ldots, x_{d+1})\chi(x_j,y_1, \ldots, y_{d-1})
 \]
 have opposite signs.
\end{enumerate}
An \emph{oriented tropical Pl\"ucker vector} is a compatible pair $(\phi, \chi)$. 
\end{definition}

We can package a compatible pair $(\phi,\chi)$ as a single map $\Phi: E^d
\to \mathbb{R}$, where 
\[\phi = -\log |\Phi| \text{ and } \chi = \mathrm{sign}(\Phi).
\]
The condition that $\Phi$ is an oriented tropical Pl\"ucker vector becomes: for each $X=(x_1,
\ldots, x_{d+1}) \in E^{d+1}$ and $Y=(y_1, \ldots, y_{d-1}) \in E^{d-1}$, either the numbers
\[
	\{(-1)^i \Phi(x_1, \ldots, \widehat{x_i}, \ldots, x_{d+1}) \Phi(x_i, y_1, \ldots, y_{d-1})\}_{i=1\ldots d+1}
\]
are all zero, or the maximum modulus occurs with both signs. The group $\mathbb{R}^\times$ acts on
the set of such $\Phi$ by multiplication, and an \emph{oriented valuated matroid} is an orbit.

By choosing an ordering of $E$, we can represent $\Phi$ as a map $\binom{E}{d} \to \mathbb{R}$;
expressing the signs in this format is less straightforward, but it allows us embed the set of
oriented tropical Pl\"ucker vectors into the Euclidean space $\mathbb{R}^{\binom{E}{d}}$ and make a
direct comparison to the space of tropical Pl\"ucker vectors, which we do in the following section.
Let $\widetilde{\Mac}(d,E) \subset
\mathbb{R}^{\binom{E}{d}} \smallsetminus \{0\}$ denote the space of all oriented tropical Pl\"ucker
vectors, topologized as a subspace of Euclidean space, and let
\[ 
\Mac(d,E) 
\subset (\mathbb{R}^{\binom{E}{d}} \smallsetminus \{0\} )/ \mathbb{R}^\times = \mathbb{P}(\mathbb{R}^{\binom{E}{d}})
\] 
denote the set of all oriented valuated matroids.

\begin{remark}
In the terminology of \cite{Anderson-hyperfields}, this is the Grassmannian of matroids over the
`real tropical' hyperfield. Note that Anderson and Davis work with a different topology on the real
line which induces a different topology of the space of oriented valuated matroids.  By
\cite[Theorem 1.1]{Anderson-hyperfields}, their topology leads to a weak equivalence
$|\mathit{MacP}(d,E)|
\stackrel{\simeq}{\to} \Mac(d,E)$.  We will show that the Euclidean topology in fact leads to a CW structure on $\Mac(d,E)$ and hence a homotopy equivalence.
\end{remark}

An inclusion $E_1 \hookrightarrow E_2$ of finite sets induces an injective map $\Mac(d,E_1)
\hookrightarrow \Mac(d,E_2)$.  When $E$ is countably infinite, we define $\Mac(d,E)$ as the colimit
of $\Mac(d,F)$ over finite subsets $F \subset E$.

\section{Polyhedral structures}

Our goal here is to describe a polyhedral structure on the space $\Mac(d,E)$ of oriented
valuated matroids.  We define this structure by lifting the polyhedral fan structure on the
Dressian, which has been studied by various authors such as
\cite{Herrmann1,Herrmann2,Olarte-local-dressians,Brandt-dressians}. In fact, there are two natural
ways to define a fan structure on the Dressian (the Pl\"ucker fan and the secondary fan), and they
were shown to coincide in \cite{Olarte-local-dressians}. In its Pl\"ucker fan description, the cones
are determined by the initial forms of the quadratic Pl\"ucker expression.  We will review this
structure and then describe the corresponding structure on the space $\Mac(d,E)$ of oriented
valuated matroids.

\subsection{Polyhedral structure of the Dressian}

Here we present a description of the Pl\"ucker fan structure of the Dressian $\mathit{Dr}(d,E)$.
Given a matroid $p$ and a compatible initial datum $I$, let $C(p,I) \subset
\mathbb{T}^{\binom{E}{d}} \smallsetminus \{\infty\}$ denote the space of all tropical Pl\"ucker
vectors having underlying matroid $p$ and initial datum $I$.  The sets $C(p,I)$ are invariant under
the diagonal action of $(\mathbb{R},+)$ and so descend to subsets of the tropical projective space
$\mathbb{P}(\mathbb{T}^{\binom{E}{d}})$.  The space $\widetilde{\mathit{Dr}}(d,E)$ of all tropical
Pl\"ucker vectors is clearly partitioned as the set-theoretic disjoint union over all $C(p,I)$, and
this partition induces a partition of $\mathit{Dr}(d,E)$.

The ambient tropical affine space $\mathbb{T}^{\binom{E}{d}}$ is stratified by the collections of
coordinates that are $\infty$.  Each stratum is canonically a Euclidean space $\mathbb{R}^n$.  If
$\mathrm{supp}(p) \subset \binom{E}{d}$ denotes the support of $p$ (i.e., the set of bases of the
matroid $p$), then $C(p,I)$ is contained in the stratum $\mathbb{R}^{\mathrm{supp}(p)}$.

\begin{proposition}
The space $C(p,I) \subset \mathbb{R}^{\mathrm{supp}(p)}$ is the relative interior of a convex
polyhedral cone that is invariant under translation along the vector $(1, \dots, 1)$.
The boundary of the closure $\overline{C(p,I)}$ in $\mathbb{T}^{\binom{E}{d}}$ is the union of
those cones $C(p',I')$ for which $p \preceq p'$ and $I(X,Y)
\subset I'(X,Y)$ for all $X$ and $Y$.
\end{proposition}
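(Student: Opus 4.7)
The plan is to unpack the definition of $C(p,I)$ as a system of linear equalities and strict linear inequalities, read off the polyhedral cone structure, and then analyse its closure stratum by stratum. Within the Euclidean stratum $\mathbb{R}^{\mathrm{supp}(p)}$, membership in $C(p,I)$ translates into: for each pair $(X,Y) \in \binom{E}{d+1} \times \binom{E}{d-1}$, the linear equalities
\[
\phi(X-i) + \phi(Y\cup i) = \phi(X-j) + \phi(Y\cup j) \quad\text{for } i,j \in I(X,Y),
\]
together with the strict inequalities
\[
\phi(X-i) + \phi(Y\cup i) < \phi(X-k) + \phi(Y\cup k)
\]
for $i \in I(X,Y)$ and $k \in (X\smallsetminus Y) \smallsetminus I(X,Y)$ with $X-k, Y\cup k \in \mathrm{supp}(p)$. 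All these conditions are homogeneous linear, so $C(p,I)$ is manifestly the relative interior of a convex polyhedral cone: a linear subspace intersected with finitely many open half-spaces. Every such condition depends only on differences of pairs of sums of two coordinates, so adding a constant uniformly to every coordinate preserves it, giving translation invariance along $(1,\ldots,1)$.

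For the boundary description, I would argue each direction separately. If $\psi \in \overline{C(p,I)} \cap \mathbb{T}^{\binom{E}{d}}$ is a limit of $\phi_n \in C(p,I)$, then taking $\mathrm{supp}(p') := \{B : \psi(B) < \infty\} \subset \mathrm{supp}(p)$ yields $p \preceq p'$, and the weak limits of the defining inequalities show that the initial datum $I'$ of $\psi$ contains $I(X,Y)$ whenever the bases involved remain in $\mathrm{supp}(p')$. For the converse, fix a valid pair $(p',I')$ with $p \preceq p'$ and $I \subset I'$, pick $\psi \in C(p',I')$ and any reference vector $\phi_* \in C(p,I)$, and consider the path
\[
\phi_t(B) = \begin{cases} \psi(B) + t\,\phi_*(B) & B \in \mathrm{supp}(p'), \\ t^{-1} & B \in \mathrm{supp}(p) \smallsetminus \mathrm{supp}(p'). \end{cases}
\]
The hypothesis $I(X,Y) \subset I'(X,Y)$ forces $X-i, Y\cup i \in \mathrm{supp}(p')$ for every $i \in I(X,Y)$; the perturbation $t\phi_*$ keeps $I(X,Y)$ tied while strictly separating it from $I'(X,Y) \smallsetminus I(X,Y)$; and the divergent values at the remaining bases dominate every sum involving them. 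Hence $\phi_t \in C(p,I)$ for all sufficiently small $t > 0$, and $\phi_t \to \psi$ in $\mathbb{T}^{\binom{E}{d}}$ as $t \to 0^+$.

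The main obstacle is the book-keeping in this converse construction: the strict inequalities defining $C(p,I)$ must hold simultaneously along the whole path. This follows from two observations---that $\phi_* \in C(p,I)$ realises each of the finitely many strict inequalities with a positive gap, and that the hypothesis $I \subset I'$ is exactly what prevents any element of $I(X,Y)$ from being shunted onto a divergent coordinate---which together yield a uniform positive lower bound on the relevant differences that survives the perturbation for all sufficiently small $t > 0$.
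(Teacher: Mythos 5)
Your overall strategy matches the paper's: unpack $C(p,I)$ as a homogeneous system of linear equalities and strict inequalities (giving the cone claim and translation invariance), then analyse the closure by showing a containment in each direction. The one genuinely different move is your explicit one-parameter path $\phi_t$ in the converse direction, where the paper instead performs a two-step perturbation.

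However, the key claim supporting your path --- that ``$I(X,Y) \subset I'(X,Y)$ forces $X-i, Y\cup i \in \mathrm{supp}(p')$ for every $i \in I(X,Y)$'' --- is false, and the gap this opens is fatal to the construction as written. The argmin $I'(X,Y) = I_\psi(X,Y)$ can equal all of $X \smallsetminus Y$ precisely because every term $\psi(X-j) + \psi(Y\cup j)$ is infinite; in that degenerate case $I(X,Y) \subset I'(X,Y)$ is automatic and says nothing about whether $X-i$ or $Y\cup i$ lies in $\mathrm{supp}(p')$. When this happens, assigning the \emph{same} value $t^{-1}$ to every coordinate in $\mathrm{supp}(p)\smallsetminus\mathrm{supp}(p')$ destroys the type-(2) equalities that $\phi_t$ must satisfy to lie in $C(p,I)$. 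Concretely, take $d=2$, $E=\{1,2,3,4\}$, $p$ uniform, and $I$ given by $I(\{1,2,3\},\{4\})=\{1,2\}$, etc.\ (this is realised, e.g.\ by $\phi_*$ with $\phi_*(\{1,2\})=1$ and all other coordinates $0$). Let $\psi$ be supported on $\{\{1,4\},\{2,4\},\{3,4\}\}$ with $\psi(\{1,4\})=1$, $\psi(\{2,4\})=\psi(\{3,4\})=0$. One checks that $\psi$ \emph{is} in $\overline{C(p,I)}$ (e.g.\ via $\phi_n(\{2,3\})=n$, $\phi_n(\{1,3\})=n+1$, $\phi_n(\{1,2\})=n+2$), so the proposition holds here, but your $\phi_t$ gives $\phi_t(\{2,3\}) + \phi_t(\{1,4\}) = t^{-1}+1 \neq t^{-1} = \phi_t(\{1,3\}) + \phi_t(\{2,4\})$, violating the required equality at $(\{1,2,3\},\{4\})$. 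The fix requires the divergent coordinates to carry distinct finite offsets before the common $t^{-1}$ is added (as the successful sequence $\phi_n$ above does), rather than all being collapsed to the same value; your uniform assignment is precisely what the degenerate case forbids.
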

\begin{proof}
The set $C(p,I) \subset \mathbb{R}^{\mathrm{supp}(p)}$ is cut out by the following set of linear equations and inequalities:
\begin{enumerate}
\item For $X\in \binom{E}{d+1}, Y\in \binom{E}{d-1}$, $i \in I(X,Y)$ and $j \notin I(X,Y)$ we have
\[\phi(X-i) + \phi(Y+i) < \phi(X-j) + \phi(Y+j).\]
\item For $X\in \binom{E}{d+1}, Y\in \binom{E}{d-1}$, $i,j \in I(X,Y)$ we have
\[\phi(X-i) + \phi(Y+i) = \phi(X-j) + \phi(Y+j).\]
\end{enumerate}

Suppose we have a sequence of points $\phi_i$ in $C(p,I)$ converging to a point $\phi_\infty$ in the
boundary.  This limit point will lie in some set $C(p',I')$, and we now derive constraints on
$(p',I')$. It is clear that $\phi_\infty$ will satisfy all conditions of type (2) above, and so
there are only two things that can happen:
\begin{enumerate}
\item  In the limit some of the strict inequalities of condition (1) can become equalities.
This means that there are one or more pairs $(X,Y)$ such that $I(X,Y) \subsetneq I'(X,Y)$.  

\item Some of the values of $\phi_i(B)$ can increase to $\infty$, so $p \preceq
\underline{\phi_\infty}$. This can only happen if $B$ does not appear in any of the type (2)
equalities for $(p,I)$; i.e., $B \neq X-i$ or $Y+i$ for any $X,Y$ and $i\in I(X,Y)$.
\end{enumerate}
This shows that the boundary is contained in the union of cells $C(p',I')$ for which $p' \succeq p$
and $I'(X,Y) \supset I(X,Y)$ for all $X,Y$.   Conversely, we must show that any point $\phi$ in a
cell $C(p',I')$ satisfying this condition can be perturbed to a point in $C(p,I)$.  We do this in
two steps.

If $B$ is a basis for $p$ but not $p'$, then $\phi(B)=\infty$, and $\phi(B)$ does not appear in any
of the type (2) equalities. However, $\phi(B)$ can appear on the right hand side of some of the type
(1) inequalities, and setting $\phi'(B)$ to be a sufficiently large number will still satisfy these
same inequalities.  Thus we obtain a perturbation of $\phi$ to a point $\phi' \in  C(p,I') \subset
\mathbb{R}^{\mathrm{supp}(p)}$. The closure of $C(p,I)$ in the stratum
$\mathbb{R}^{\mathrm{supp}(p)}$ is
\[
	\bigcup_{I' \supset I} C(p,I').
\]
We can thus perturb $\phi'$ to a point $\phi''$ in $C(p,I)$, as desired.
\end{proof}

\begin{corollary}
The space $\widetilde{\mathit{Dr}}(d,E)$ is a polyhedral fan with cones $C(p,I)$, and this descends
to a polyhedral fan structure on the Dressian $\mathit{Dr}(d,E)$.
\end{corollary}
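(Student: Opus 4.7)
The plan is to deduce the fan axioms from the preceding proposition together with the defining properties of the cells $C(p,I)$. First I would observe that $\widetilde{\mathit{Dr}}(d,E)$ is by construction the set-theoretic disjoint union of the relative interiors $C(p,I)$ as $(p,I)$ ranges over compatible pairs, since any tropical Pl\"ucker vector $\phi$ has a uniquely determined underlying matroid $\underline{\phi}$ and initial datum $I_\phi$. The proposition already provides that each $\overline{C(p,I)}$ is a convex polyhedral cone in the Euclidean stratum $\mathbb{R}^{\mathrm{supp}(p)}$, and identifies its boundary (equivalently, its proper faces) combinatorially as the union of those $C(p',I')$ with $p\preceq p'$ and $I(X,Y)\subseteq I'(X,Y)$ for all $X,Y$.

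Next I would verify the face-intersection axiom, namely that $\overline{C(p_1,I_1)}\cap\overline{C(p_2,I_2)}$ is a common face of both. Any point in the intersection lies in some $C(p',I')$, and the boundary description of the proposition forces $p_i\preceq p'$ and $I_i(X,Y)\subseteq I'(X,Y)$ for $i=1,2$; conversely, any $(p',I')$ dominating both pairs in this sense gives a cone $\overline{C(p',I')}$ lying inside both closures. Thus the intersection is the union of cones $\overline{C(p',I')}$ indexed by pairs that simultaneously specialize $(p_1,I_1)$ and $(p_2,I_2)$, and since both closed cones are convex polyhedra, this union must collapse to the single cone corresponding to the coarsest such dominating pair, which is a face of each by the proposition.

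To descend to $\mathit{Dr}(d,E)$ I would observe that the defining equalities and inequalities of $C(p,I)$ only involve differences of coordinates, so each cone is invariant under the diagonal $(\mathbb{R},+)$-action by translation along the all-ones vector, and moreover the all-ones line sits inside the lineality space of every cone. Quotienting a polyhedral fan by a line contained in the lineality space of every cone yields a polyhedral fan in the quotient space, giving the desired structure on the Dressian. Compatibility with the boundary face poset is automatic because the quotient is by a uniform translation action that commutes with the specialization and refinement relations on $(p,I)$.

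The main obstacle, insofar as there is one, is the bookkeeping across the stratification of $\mathbb{T}^{\binom{E}{d}}$ by loci where prescribed coordinates equal $\infty$: the cone $\overline{C(p,I)}$ is an honest convex polyhedron in $\mathbb{R}^{\mathrm{supp}(p)}$, yet its topological closure in tropical affine space also contains cones sitting in higher-$\infty$ strata corresponding to specializations $p\preceq p'$. I would therefore phrase the fan structure stratum-by-stratum, treating $\widetilde{\mathit{Dr}}(d,E)$ as a generalised polyhedral fan whose cones close up across strata in the controlled manner dictated by the proposition. Once this language is pinned down, everything needed is already packaged in the proposition, and the corollary follows with no further computation.
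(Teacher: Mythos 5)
The paper presents this corollary \emph{without proof} --- it is read off directly from the preceding proposition, which already establishes that each closed cell $\overline{C(p,I)}$ is a polyhedral cone and that its boundary is the prescribed union of lower cells. So there is no written argument in the paper to compare against; what follows is an assessment of your proposal on its own merits.

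Your outline is sound in its identification of what needs checking (the partition, the face-intersection axiom, lineality and descent, and the caveat about closure across the $\infty$-strata), but the face-intersection step contains a genuine gap. You assert that $\overline{C(p_1,I_1)}\cap\overline{C(p_2,I_2)}$ ``must collapse to the single cone corresponding to the coarsest such dominating pair.'' Two things go unjustified here: the poset of pairs $(p',I')$ dominating both $(p_1,I_1)$ and $(p_2,I_2)$ need not have a unique minimum (the set of common bases of two matroids is not in general the base set of a matroid, and compatibility of an initial datum is a further constraint), and even granting a minimal pair exists, you do not argue that its closed cell exhausts the whole intersection rather than a proper subset of it.

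The missing ingredient, which is implicit in the proof of the proposition, is that each boundary cell $C(p',I')$ with $(p',I')\succ (p,I)$ is precisely the \emph{relative interior of a face} of $\overline{C(p,I)}$: enlarging some $I(X,Y)$ tightens certain defining weak inequalities to equalities, and specializing $p$ to $p'$ sends certain coordinates to $\infty$, and both are face conditions on the closed cone. Once that is established, the intersection of two closed cones is a closed convex set that is a union of relative interiors of faces of each, and such a set is automatically a single face (its carrier), with no need to identify --- or even posit the existence of --- a minimal dominating pair. Alternatively, and closer to the Pl\"ucker-fan literature the paper cites, one can argue that the cells $C(p,I)$ are exactly the cells of the common refinement of the finitely many tropical hyperplane fans cut out by the quadratic Pl\"ucker relations, and a common refinement of finitely many fans is again a fan.
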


\subsection{Polyhedral structure for the space of oriented valuated matroids}

Consider the map 
\[
	\mathbb{R}^{\binom{E}{d}} \to \mathbb{T}^{\binom{E}{d}}
\]
given by applying $x \mapsto -\log |x|$ component-wise.  This induces a map of subspaces $\widetilde{\Mac}(d,E) \to \widetilde{\mathit{Dr}}(d,E)$ which projectivises to a map
\[
\Mac(d,E) \to \mathit{Dr}(d,E).
\]
This map is given by forgetting the orientation data and sending an oriented valuated matroid to its
underlying valuated matroid.   We will show that the polyhedral fan structure on the Dressian lifts
to a CW complex structure on $\Mac(d,E)$ (that can be viewed as a polyhedral complex structure in appropriate coordinates).

\begin{lemma}\label{lem:chirotope-compatible-across-cells}
If a chirotope $\chi$ is compatible with a tropical Pl\"ucker vector $\phi \in C(p,I)$, then it is also compatible with any other tropical Pl\"ucker vector $\phi' \in C(p,I)$.
\end{lemma}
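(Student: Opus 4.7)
The plan is to observe that the conditions defining compatibility of $\chi$ with a tropical Pl\"ucker vector in Definition \ref{def:chirotope-compatibility} depend on the vector only through the data $(p,I) = (\underline{\phi}, I_\phi)$, and then to quote the definition of $C(p,I)$.

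First I would look at condition (1) of the definition: $\phi(B) = \infty$ iff $\chi(B) = 0$. Since $\phi(B)=\infty$ is equivalent to $B$ failing to be a basis of $\underline{\phi}$, this condition reads $\chi(B)=0 \Leftrightarrow p(B)=0$, which depends on $\phi$ only through its underlying matroid $p = \underline{\phi}$. By the defining property of the cell $C(p,I)$, every vector in $C(p,I)$ has the same underlying matroid $p$, so condition (1) holds for $\phi$ if and only if it holds for $\phi'$.

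Next I would turn to condition (2). For a given pair $(X,Y) \in \binom{E}{d+1}\times\binom{E}{d-1}$ the definition selects the set of indices $i$ at which
\[
\phi(\{x_1,\ldots,\widehat{x_i},\ldots,x_{d+1}\}) + \phi(\{x_i,y_1,\ldots,y_{d-1}\})
\]
attains its minimum; this is by definition the value $I_\phi(X,Y)$ of the initial datum of $\phi$. The remainder of the condition (existence of a pair $i,j$ in this set for which the signed chirotope products have opposite signs) involves only $\chi$ and this index set, not the numerical values of $\phi$. Since $\phi' \in C(p,I)$ also satisfies $I_{\phi'}(X,Y) = I(X,Y) = I_\phi(X,Y)$, the selected index set is the same for $\phi$ and $\phi'$, and condition (2) therefore holds for one iff it holds for the other.

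The proof is essentially bookkeeping — there is no real obstacle. The only thing to be careful about is tracking the role of the implicit ordering of $E$ used to pass between the alternating and unordered set presentations, to be sure that the set of minimizing indices singled out in condition (2) really is the initial datum $I_\phi$ as defined earlier in the paper.
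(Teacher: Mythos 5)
Your proof is correct and takes essentially the same approach as the paper's: the paper simply observes that condition (1) of Definition \ref{def:chirotope-compatibility} depends only on the underlying matroid $p$ and condition (2) only on the initial datum $I$, which is exactly what you spell out in more detail.
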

\begin{proof}
This follows immediately from the definition of the compatibility condition of Definition
\ref{def:chirotope-compatibility}, since condition (1) of compatibility of $\chi$ with $\phi$ only
depends on the underlying matroid $\underline{\phi} = p$ and condition (2) only depends on the initial
datum $I$.
\end{proof}

Let $\chi$ be a chirotope and $I$ a compatible initial datum.  Let $\widetilde{D}(\chi,I)$ denote the set of all oriented valuated tropical Pl\"ucker vectors $\Phi$ such that
\begin{enumerate}
	\item The valuated matroid $|\Phi|$ has initial datum $I_{|\Phi|} = I$; 
	\item $\mathrm{sign}(\Phi) = \chi$.
\end{enumerate}
Let $D([\chi],I)$ denote the image of $\widetilde{D}(\chi,I)$ in the space $\Mac(d,E) \subset
\mathbb{P}(\mathbb{R}^{\binom{E}{d}})$; this is the set of all oriented valuated matroids $[\Phi]$
with initial datum $I$ and underlying oriented matroid $[\chi]$.  It is clear that the preimage of $D([\chi],I)$ in $\widetilde{\Mac}(d,E)$ is the disjoint union of two cells $\widetilde{D}(\chi,I) \sqcup \widetilde{D}(-\chi,I)$.

\begin{theorem}\label{thm:cw-structure}
The space $\widetilde{\Mac}(d,E)$ is a CW complex with cells $\widetilde{D}(\chi,I)$, and
$\Mac(d,E)$ is a CW complex with cells $D([\chi],I)$.  Moreover, under the map
$\mathbb{R}^{\binom{E}{d}} \to \mathbb{T}^{\binom{E}{d}}$, each open cell $\widetilde{D}(\chi,I)$
maps homeomorphically onto the relative interior of a convex polyhedral cone.
\end{theorem}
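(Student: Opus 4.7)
The plan is to transfer the polyhedral fan structure on the Dressian along the map $\widetilde{\Mac}(d,E) \to \widetilde{\mathit{Dr}}(d,E)$ induced by $-\log|\cdot|$, and then to identify each preimage cell with the relative interior of a convex polyhedral cone by combining the previous Proposition with Lemma~\ref{lem:chirotope-compatible-across-cells}.

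For each sign pattern $\chi \in \{-1, 0, +1\}^{\binom{E}{d}}$, the componentwise map $-\log|\cdot|$ restricts to a homeomorphism from the locus $\{\Phi \in \mathbb{R}^{\binom{E}{d}} : \mathrm{sign}(\Phi) = \chi\}$ onto the stratum of $\mathbb{T}^{\binom{E}{d}}$ whose $\infty$-coordinates are exactly $\{B : \chi(B) = 0\}$, with continuous inverse sending $\phi$ to the vector $B \mapsto \chi(B)\cdot e^{-\phi(B)}$ (adopting $e^{-\infty} = 0$). Intersecting with $\widetilde{\Mac}(d,E)$ and invoking Lemma~\ref{lem:chirotope-compatible-across-cells}, this restricts to a homeomorphism $\widetilde{D}(\chi, I) \xrightarrow{\sim} C(|\chi|, I)$. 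Combined with the previous Proposition, which identifies $C(|\chi|, I)$ as the relative interior of a convex polyhedral cone, this establishes the final clause of the theorem and in particular shows each open cell is homeomorphic to a Euclidean space of the appropriate dimension.

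Next I would establish the closure relation. If $\Phi_n \in \widetilde{D}(\chi, I)$ converges to $\Phi_\infty \in \widetilde{\Mac}(d,E)$, then $\phi_n := -\log|\Phi_n| \in C(|\chi|, I)$ converges in $\mathbb{T}^{\binom{E}{d}}$ to $\phi_\infty := -\log|\Phi_\infty|$, which by the Proposition lies in some $C(p', I')$ with $|\chi| \preceq p'$ and $I(X,Y) \subset I'(X,Y)$. The sign pattern $\chi'$ of $\Phi_\infty$ agrees with $\chi$ on $\mathrm{supp}(p')$ and vanishes elsewhere. To verify that $\chi'$ is a chirotope compatible with $(p', I')$, I would check that the oriented tropical Pl\"ucker condition is closed under limits: for each $(X,Y)$, either the quadratic expressions for $\Phi_\infty$ all vanish, or a subsequence argument stabilises the indices at which $\Phi_n$ attains maximum modulus with $+$ and $-$ signs, and continuity forces these signs to persist in the limit. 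The reverse inclusion, namely that every such specialising pair $(\chi', I')$ actually appears in the boundary, is obtained by lifting the perturbation argument in the proof of the Proposition to $\mathbb{R}^{\binom{E}{d}}$ via exponentiation against the sign pattern $\chi$.

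Finally I would assemble the CW structure. For finite $E$ there are only finitely many pairs $(\chi, I)$ and hence finitely many cells; the closure of each cell in the compact projective space $\mathbb{P}(\mathbb{R}^{\binom{E}{d}})$ is homeomorphic to the closed polyhedron $\overline{C(|\chi|,I)}$ in tropical projective space (uniqueness of the specialising chirotope makes the projection injective on closures), and this polyhedron admits a characteristic map from a closed disk of the appropriate dimension. A finite cell decomposition of a compact Hausdorff space automatically satisfies the weak-topology condition, giving a CW structure on $\Mac(d,E)$. The statement for $\widetilde{\Mac}(d,E)$ follows by pulling back along the $\mathbb{R}^\times$-quotient, and the case of infinite $E$ by the standard colimit argument. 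The main obstacle is the closure calculation above: one must verify that the limit chirotope $\chi'$ is compatible with the \emph{refined} initial datum $I'$ rather than merely with $I$, which requires combining closedness of the $\pm$-extremality condition with the corresponding specialisation of the argmin data controlling the Dressian side.
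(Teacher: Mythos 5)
Your proof takes essentially the same approach as the paper's, which simply invokes Lemma~\ref{lem:chirotope-compatible-across-cells} to identify $\widetilde{D}(\chi,I)$ with $C(|\chi|,I)$ via $-\log|\cdot|$ and then asserts that the boundary structure transfers from the Dressian cells. You supply several details the paper leaves implicit (the explicit inverse homeomorphism $\phi \mapsto \chi(B)e^{-\phi(B)}$, closedness of the oriented tropical Pl\"ucker condition under limits, the lifted perturbation argument for the reverse inclusion, and injectivity of the forgetful map on the closure of a fixed cell); note that the ``main obstacle'' you flag at the end is already resolved by your closedness argument, since once $\Phi_\infty$ is known to lie in $\widetilde{\Mac}(d,E)$ its chirotope is compatible with its own initial datum $I_{|\Phi_\infty|}=I'$ by the very definition of the space.
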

\begin{proof}
Clearly $\widetilde{\Mac}(d,E)$ is the set-theoretic disjoint union of the sets $D(\chi, I)$.
If $D(\chi, I)$ is nonempty, then by Lemma \ref{lem:chirotope-compatible-across-cells} it is homeomorphic to the cell $C(p,I)$. Since the boundary of each cell $C(p,I) \subset \widetilde{Dr}(d,E)$ is a union of lower dimensional cells, the same is true for $D(\chi, I)$.  The corresponding claims for $\Mac(d,E)$ follow immediately.
\end{proof}

\begin{theorem}
There is a homotopy equivalence $|\mathit{MacP}(d,E)| \simeq \Mac(d,E)$.
\end{theorem}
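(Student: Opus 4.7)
The strategy is to combinatorially compare the face poset of the regular CW complex $\Mac(d,E)$ with the MacPhersonian using Quillen's Theorem~A.

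By Theorem~\ref{thm:cw-structure}, $\Mac(d,E)$ is a regular CW complex: each closed cell is a convex polyhedron (in logarithmic coordinates) whose boundary lifts the Dressian face structure established in the previous proposition. Hence $\Mac(d,E)$ is canonically homeomorphic to the order complex of its face poset $\mathcal{P} = \mathcal{P}(d,E)$, whose elements are the pairs $([\chi], I)$ and whose order is
\[
([\chi_1], I_1) \leq ([\chi_2], I_2) \iff [\chi_1] \succeq [\chi_2] \text{ and } I_1 \supseteq I_2.
\]
The forgetful map $F \colon \mathcal{P} \to \mathit{MacP}(d,E)^{\mathrm{op}}$, $([\chi], I) \mapsto [\chi]$, is order-preserving and its realization is the candidate comparison map.

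By Quillen's Theorem~A, it suffices to show that for each oriented matroid $[\chi_0]$ the sub-poset
\[
\mathcal{P}_{[\chi_0]} \defeq \{([\chi], I) \in \mathcal{P} : [\chi] \succeq [\chi_0]\} = F^{-1}(\mathit{MacP}(d,E)^{\mathrm{op}}_{\leq [\chi_0]})
\]
has contractible nerve. Geometrically, $|\mathcal{P}_{[\chi_0]}|$ is the closed subcomplex $V([\chi_0]) \subset \Mac(d,E)$ of oriented valuated matroids whose underlying chirotope specializes $[\chi_0]$; equivalently, it is the intersection of $\Mac(d,E)$ with the projectivization of the signed orthant determined by $\chi_0$. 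To establish contractibility, consider the further projection $\pi \colon \mathcal{P}_{[\chi_0]} \to \mathit{MacP}_{\geq [\chi_0]}$. The base has $[\chi_0]$ as a minimum, hence is contractible. The fibre of $\pi$ over $[\chi]$ is the poset of initial data compatible with $\chi$, which possesses a unique maximum element
\[
I_{\max}(\chi)(X,Y) \defeq \{i : \chi(X \smallsetminus x_i)\,\chi(Y \cup x_i) \neq 0 \},
\]
whose compatibility with $\chi$ is guaranteed by the chirotope axiom that the relevant Pl\"ucker expression contains both signs whenever it is nonzero. Each fibre is therefore contractible, and a fibered poset argument combining these two contractibilities yields that $\mathcal{P}_{[\chi_0]}$ has contractible nerve.

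\textbf{Main obstacle.} The principal difficulty is making this fibered argument precise. The natural candidate section $s([\chi]) = ([\chi], I_{\max}(\chi))$ is \emph{not} a poset map in the obvious direction: $I_{\max}$ is contravariant under specialization of the chirotope, because if $\chi_1 \succeq \chi_2$ then the smaller support of $\chi_1$ forces $I_{\max}(\chi_1) \subseteq I_{\max}(\chi_2)$, opposite to what monotonicity of $s$ would require. One must therefore either devise a poset-theoretic zigzag implementing the homotopy equivalence $|\mathcal{P}_{[\chi_0]}| \simeq |\mathit{MacP}_{\geq [\chi_0]}|$ without a direct section, or build a geometric deformation retraction of $V([\chi_0])$ cell-by-cell using the polyhedral structure from Theorem~\ref{thm:cw-structure}. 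A naive Euclidean or log-linear interpolation toward a representative of $[\chi_0]$ does not suffice, because such an interpolation can cause the ``maximum modulus attained with both signs'' condition in the oriented tropical Pl\"ucker axiom to fail at intermediate times; the contraction must respect this combinatorial condition throughout.
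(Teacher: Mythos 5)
Your overall strategy matches the paper's: translate the problem into a comparison of posets via Quillen's Theorem A, using the cell structure from Theorem~\ref{thm:cw-structure}, and exploit the existence of the maximal compatible initial datum $I_{\max}(\chi)$. You also correctly identify why no naive section $s([\chi]) = ([\chi], I_{\max}(\chi))$ exists: $I_{\max}$ moves in the wrong direction under specialization. But you stop there and declare it an obstacle, leaving the central step (contractibility of $\mathcal{P}_{[\chi_0]}$, equivalently of the comma categories of the forgetful poset map) unproved. As written, this is a genuine gap, not merely a deferred detail.

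The paper closes this gap by applying Quillen's Theorem A twice rather than constructing a section. Write $\pi\colon \mathcal{C} \to \mathit{MacP}(d,E)$ for the forgetful poset map $([\chi],I)\mapsto [\chi]$, $\mathcal{F}_{[\chi]}$ for the strict (geometric) fibre $\{([\chi],I)\}$, and $h\mathcal{F}_{[\chi]} = [\chi]\backslash\pi$ for the comma category $\{([\tau],I) : [\tau]\geq[\chi]\}$. Step one: $\mathcal{F}_{[\chi]}$ has final object $([\chi],I_{\max}^{\chi})$, hence is contractible — this you have. Step two (the piece you are missing): the inclusion $j\colon \mathcal{F}_{[\chi]} \hookrightarrow h\mathcal{F}_{[\chi]}$ is a homotopy equivalence, proved by applying Theorem A to $j$ itself. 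The slice $j/([\tau],I)$ is the poset of initial data $J$ compatible with $\chi$ with $J\subset I$, and this slice has $I$ itself as final object — the decisive combinatorial fact being that an initial datum compatible with $\tau$ is also compatible with any chirotope $\chi\preceq\tau$. This is precisely the ``zigzag without a direct section'' you gesture at; once you have it, $h\mathcal{F}_{[\chi]}$ is contractible for every $[\chi]$, and a final application of Theorem A to $\pi$ finishes the proof. Your proposal identifies the right geometry and the right final objects, but without the two-stage Theorem A argument and the monotonicity-of-compatibility lemma just described, the contractibility of the homotopy fibres — and hence the whole theorem — remains unestablished.
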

\begin{proof}
Consider the open covering $\mathscr{U}$ of $\Mac(d,E)$ by open stars of cells.  As a consequence of
Theorem \ref{thm:cw-structure},  this is a good cover, and by the Nerve Theorem
(\cite{Borsuk-nerve-theorem}, \cite[p. 141]{Weil-nerve-theorem}, or \cite[Theorem
2]{McCord-nerve-theorem}), $\Mac(d,E)$ is homotopy equivalent to the nerve of the covering
$\mathscr{U}$.  This nerve is isomorphic to the nerve of the poset $\mathcal{C}$ of cells, and by Theorem \ref{thm:cw-structure}, $\mathcal{C}$ is the set of pairs $([\chi],I)$ with $([\chi],I)
\leq ([\chi'],I')$ if $I(X,Y) \subset I'(X,Y)$ for any $X,Y$, and and $[\chi] \leq [\chi']$.

Sending $([\chi],I)$ to $[\chi]$ defines a morphism of posets 
\[
\pi: \mathcal{C} \to \mathit{MacP}(d,E).
\]
We will show that $\pi$ gives a homotopy equivalence on nerves by first showing that each geometric fibre is contractible and then showing that the inclusion of each geometric fibre into the corresponding homotopy fibre is a homotopy equivalence.

Let $\chi$ be a chirotope, which gives a vertex of $\mathit{MacP}(d,E)$. The geometric fibre
$\mathcal{F}_{[\chi]}$ of $\pi$ over $[\chi]$ is the sub-poset of $\mathcal{C}$ consisting of all pairs
$([\chi], I)$.  This poset has a final object given by $([\chi],I_{\mathrm{max}}^{\chi})$,
where $I_{\mathrm{max}}^{\chi}$ is the maximal initial datum compatible with $\chi$:
\[
	x\in I(X,Y) \text{ if } |\chi|(X - x)\cdot |\chi|(Y + x) \neq 0.
\]
Hence the nerve of $\mathcal{F}_{[\chi]}$ is contractible.  

Now consider the fibre category $h\mathcal{F}_{[\chi]} = [\chi] \backslash \pi$ consisting of pairs
$([\tau],I)$ such that $[\tau] \geq [\chi]$. We think of this as a model for the homotopy fibre of
$\pi$.  Let $j: \mathcal{F}_{[\chi]} \hookrightarrow h\mathcal{F}_{[\chi]}$ denote the inclusion.  We
will show that $j$ is an equivalence by showing that its fibre categories are all contractible.  Given an object $([\tau],I)$ of $h\mathcal{F}_{[\chi]}$, the fibre category $j / ([\tau],I)$ is the poset of initial data $J$ compatible with $\chi$ and such that $J(X,Y) \subset I(X,Y)$.  This fibre category has $I$ as a final object since an initial datum compatible with $\tau$ is compatible with any chirotope $\tau' \leq \tau$, and so in particular $I$ is compatible with $\chi$ because $\chi \leq \tau$

Now, by Quillen's Theorem A (\cite{Quillen-Thm-A} or \cite[Theorem 6]{McCord-Thm-A}), the map $j$
induces a homotopy equivalence of geometric realizations of nerves,
\[
	|\mathcal{F}_{[\chi]}| \hookrightarrow |h\mathcal{F}_{[\chi]}|, 
\]
and hence $|\mathcal{F}_{[\chi]}|$ is contractible for any choice of $\chi$.  Then, by Quillen's Theorem A once again, $\pi$ induces a homotopy equivalence $|\mathcal{C}| \to |\mathit{MacP}(d,E)|$.
\end{proof}

\section{The $H$-space structure}

Here we investigate maps between the spaces $\Mac(d,E)$ corresponding to direct sums.

An injective map $\alpha: E\to F$ induces an injective map $\alpha_*: \Mac(d,E) \hookrightarrow
\Mac(d,F)$. Given two injective maps $\alpha,\beta: E \to F$, in this section we will construct a
homotopy between $\alpha_*$ and $\beta_*$.  Moreover, we will show that these homotopies are related
by higher homotopies.

\subsection{Direct sums}

Suppose $p_1$ and and $p_2$ are matroids of rank $d_1$ and $d_2$ on ground sets $E_1$ and $E_2$.
The direct sum $p_1 \oplus p_2$ is a matroid of rank $d_1 + d_2$ on $E_1 \sqcup E_2$ whose bases are
those sets of the form $B_1 \sqcup B_2$ where each $B_i$ is a basis for $p_i$.  This operation lifts
to oriented valuated matroids as follows.  An alternating function $(E_1 \sqcup E_2)^{d_1 + d_2} \to
\mathbb{R}$ is uniquely determined by its restriction to the subset $E_1^{d_1} \times E_2^{d_2}$,
and here we define $\Phi_1 \oplus \Phi_2$ to simply be the product of $\Phi_1$ on the first factor
times $\Phi_2$ on the second.

This direct sum operation defines a continuous map 
\[
	\Mac(d_1, E_1) \times \Mac(d_2, E_2) \to \Mac(d_1 + d_2, E_1 \sqcup E_2).
\]

If $E_1 = E_2 = \mathbb{N}$, then we may choose an injective map $\alpha: \mathbb{N} \sqcup
\mathbb{N} \to \mathbb{N}$.  Composing with $\alpha_*$ yields a map
\[
	\Mac(d_1, \mathbb{N}) \times \Mac(d_2, \mathbb{N}) \to \Mac(d_1 + d_2, \mathbb{N}).
\]
Let $\mathcal{M} = \coprod_{d=0}^\infty \Mac(d,\mathbb{N})$.  We then have a binary operation
\[
	\mu: \mathcal{M} \times \mathcal{M} \to \mathcal{M}.
\]
The space $\Mac(0,\mathbb{N})$ is a single point (corresponding to the unique rank zero oriented
valuated matroid), and this point is a unit for $\mu$.  Note however that $\mu$ is neither
commutative nor associative.  Nevertheless, below we show that it is associative and commutative up
to homotopy so long as the image of $\alpha$ has infinite complement.  Our main result will then be
to show that it extends to an $E_\infty$ structure.

\subsection{Matroid sliding}

Given a tuple of injective maps $A=(\alpha_k: E \hookrightarrow F)_{k=0}^n$ that are pairwise
disjoint, let $x\mapsto \overline{x}$ be the map
\begin{equation}\label{eq:pi-map}
\bigcup_k \alpha_k(E) \to E
\end{equation}
that restricts to $\alpha_k^{-1}$ on the image of $\alpha_k$ (for each $k$).  For $(x_1,
\ldots, x_d) \in F^d$, let $b_i\in \mathbb{N}$ ($i=1 \ldots, d$) denote the number of components in
$\alpha_i(E)$. Given an oriented tropical Pl\"ucker vector $\Phi$ of rank $d$ on $E$ and a point
$t=(t_0, \ldots, t_n) \in \Delta^{n}$ (i.e., $t_k \in [0,1]$ and $\sum_k t_k = 1$), consider the
function $\Phi_t^A : F^d \to \mathbb{R}$ given by the formula
\[
\Phi_t^A(x_1, \ldots, x_d) = 
\begin{cases} 
\Phi(\overline{x_1}, \ldots, \overline{x_d}) t_0^{b_0} t_1^{b_1} \cdots t_n^{b_n} & \sum b_k = d\\
0 &  \text{otherwise.}
\end{cases}
\]

\begin{proposition}\label{prop:matroid-sliding}
If $\Phi$ is an oriented tropical Pl\"ucker vector on ground set $E$, then the function $\Phi_t^A$ defined
above is an oriented tropical Pl\"ucker vector on $F$,   Moreover, when $(t_0, \ldots, t_n)$ is at
the $k^{th}$ vertex of $\Delta^n$ (i.e., $t_k = 1$ and $t_\ell = 0$ for $\ell \neq k$), then $\Phi_t^A = (\alpha_k)_* \Phi$.
\end{proposition}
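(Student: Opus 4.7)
The plan is to verify directly from the formula that $\Phi_t^A$ satisfies each defining condition of an oriented tropical Pl\"ucker vector on $F$: that it is alternating, not identically zero, and satisfies the oriented Pl\"ucker relation. The boundary identification at the vertices of $\Delta^n$ is then immediate, since when $t_k = 1$ and $t_\ell = 0$ for $\ell \neq k$, the product $t_0^{b_0}\cdots t_n^{b_n}$ vanishes unless $b_k = d$, and in that case all $x_j$ lie in $\alpha_k(E)$ and the formula collapses to $\Phi(\alpha_k^{-1}(x_1),\ldots,\alpha_k^{-1}(x_d)) = ((\alpha_k)_*\Phi)(x_1,\ldots,x_d)$.

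That $\Phi_t^A$ is alternating and vanishes on tuples with repeated entries is inherited from $\Phi$, because the exponents $b_\ell$ depend only on the unordered multi-set of arguments and $x\mapsto \overline{x}$ is well defined on $\bigcup_k \alpha_k(E)$. Non-vanishing is also free: for any basis $B$ of the underlying matroid of $\Phi$ and any $k$ with $t_k>0$, we have $\Phi_t^A(\alpha_k(B)) = t_k^d\,\Phi(B) \neq 0$.

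The substance of the proof is the Pl\"ucker relation. Fix tuples $X=(x_1,\dots,x_{d+1}) \in F^{d+1}$ and $Y=(y_1,\dots,y_{d-1}) \in F^{d-1}$, and set
\[
s_i := (-1)^i\, \Phi_t^A(x_1,\ldots,\widehat{x_i},\ldots,x_{d+1})\,\Phi_t^A(x_i,y_1,\ldots,y_{d-1}).
\]
A short case check shows that if some entry of $X$ or $Y$ lies outside $\bigcup_k \alpha_k(E)$, then every $s_i$ vanishes. Otherwise every entry has a well-defined pull-back; let $k(i)$ be the unique index with $x_i\in \alpha_{k(i)}(E)$, and let $B_\ell, C_\ell$ count the entries of $X$ and $Y$ respectively in $\alpha_\ell(E)$. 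The exponent of $t_\ell$ appearing in $\Phi_t^A(x_1,\ldots,\widehat{x_i},\ldots,x_{d+1})$ is $B_\ell - \delta_{\ell,k(i)}$, while the exponent in $\Phi_t^A(x_i,y_1,\ldots,y_{d-1})$ is $C_\ell + \delta_{\ell,k(i)}$. The $\delta$-contributions cancel in the product, so
\[
s_i \;=\; (-1)^i\, \Phi(\overline{x_1},\ldots,\widehat{\overline{x_i}},\ldots,\overline{x_{d+1}})\, \Phi(\overline{x_i},\overline{y_1},\ldots,\overline{y_{d-1}})\,\cdot\, K, \qquad K := \prod_\ell t_\ell^{B_\ell + C_\ell},
\]
with common nonnegative scalar $K$ independent of $i$.

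If $K=0$ then every $s_i$ vanishes; otherwise the sign condition for the $s_i$ reduces to the oriented Pl\"ucker relation for $\Phi$ applied to the pull-back tuples in $E^{d+1}\times E^{d-1}$, which holds by hypothesis on $\Phi$ and remains valid even when these tuples have repeated entries (such repetitions force the relevant $\Phi$-values to zero by the alternating property). The only step that really requires thought is the factorisation above: the cancellation of the $\delta_{\ell,k(i)}$'s, which rests on the disjointness of the images $\alpha_k(E)$, is precisely what makes the oriented Pl\"ucker relation for $\Phi_t^A$ descend cleanly to that of $\Phi$.
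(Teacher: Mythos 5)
Your proof is essentially correct and follows the same underlying idea as the paper's argument, but it packages the case analysis more uniformly. The paper splits into three cases according to how many entries of $X$ share a fibre of $\bigcup_k \alpha_k(E) \to E$: three or more (everything vanishes), exactly two (a direct sign computation shows the two surviving terms cancel), and none (the $t$-monomial is constant and the relation descends to $\Phi$). You instead observe that the $\delta_{\ell,k(i)}$-contributions to the exponents cancel in the product $\Phi_t^A(x_1,\ldots,\widehat{x_i},\ldots,x_{d+1})\,\Phi_t^A(x_i,y_1,\ldots,y_{d-1})$ regardless of repetitions, so that every $s_i$ factors as a fixed scalar $K$ times the corresponding term for $\Phi$ at the pull-back tuples, and then invoke the relation for $\Phi$ at those (possibly degenerate) tuples. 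Since the paper states the $\Phi$-form of the relation over all of $E^{d+1}\times E^{d-1}$, not just distinct tuples, this invocation is legitimate and your route is cleaner in that it avoids the case-split.

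One imprecision to flag: your parenthetical claim that repetitions ``force the relevant $\Phi$-values to zero by the alternating property'' does not cover the case where $\overline{X}$ has exactly two equal entries $\overline{x_j}=\overline{x_{j'}}$. There, the $\Phi$-values for $i=j$ and $i=j'$ need not vanish; what saves the day is that they are equal up to the sign $(-1)^{j+j'+1}$, so the two surviving terms in the Pl\"ucker list carry opposite signs. This is exactly the computation the paper's Case 2 performs. Your argument is still correct because you also appeal to the hypothesis on $\Phi$ (which, as stated in the paper, ranges over arbitrary tuples), but if you want the parenthetical to serve as a self-contained justification of the degenerate case, it should be replaced by that sign calculation rather than a claim of vanishing.
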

\begin{proof}
For any tuples $X=(x_1,\ldots, x_{d+1}) \in F^{d+1}$ and $Y=(y_1, \ldots, y_{d-1}) \in F^{d-1}$, consider the list of real numbers
\begin{equation}\label{eq:plucker}
	 \{\Phi_t^A(x_1, \ldots, \widehat{x_i}, \ldots, x_{d+1}) \cdot \Phi_t^A(x_i, y_1, \ldots, y_{d-1}) \in \mathbb{R}\}_{i=1\ldots, d+1}.
\end{equation}
We will proceed by considering the various ways in which the tuples $X$ and $Y$ can meet the fibres of
the map \eqref{eq:pi-map}. Let $a_k$ and $b_k$ denote the number of components of $X$ and $Y$
respectively lying in the image of $\alpha_k$.

If $X$ contains 3 or more elements in the same fibre, then for any $i$, the tuple \[(\overline{x_1},
\dots, \widehat{\overline{x_i}}, \dots, \overline{x_n})\] will have at least two components that are
equal, and so $\Phi_t^A(x_1, \ldots, \widehat{x_i}, \ldots, x_{d+1}) = 0$.  Thus all of the numbers
in \eqref{eq:plucker} are zero, and so the oriented tropical Pl\"ucker relation at $(X,Y)$ is
trivially satisfied.

Suppose $X$ contains a pair of elements $x_j,x_{j'}$ within the same fibre, so $\overline{x_j} =
\overline{x_{j'}}$, with  $x_j \in \mathrm{Im}(\alpha_k)$ and $x_{j'} \in \mathrm{Im}(\alpha_{k'})$.
Then $\Phi_t^A(x_1, \ldots, \widehat{x_i}, \ldots, x_{d+1})$ can only be nonzero if either $i = j$
or $i=j'$.  Thus there are at most two nonzero terms in the \eqref{eq:plucker}. We have
\begin{multline*}
(-1)^j \Phi_t^A(x_1, \ldots, \widehat{x_j}, \ldots, x_{d+1}) \cdot \Phi_t^A(x_j, y_1, \ldots, y_{d-1}) \\
=(-1)^j \Phi(\overline{x_1}, \ldots, \widehat{\overline{x_j}}, \ldots, \overline{x_{d+1}}) t_0^{a_0} \cdots t_k^{a_k - 1} \cdots t_n^{a_n} \cdot \Phi(\overline{x_j}, \overline{y_1}, \ldots, \overline{y_{d-1}})t_0^{b_0} \cdots t_k^{b_k + 1} \cdots t_n^{b_n}\\
= (-1)^j \Phi(\overline{x_1}, \ldots, \widehat{\overline{x_j}}, \ldots, \overline{x_{d+1}}) \cdot \Phi(\overline{x_j}, \overline{y_1}, \ldots, \overline{y_{d-1}})t_0^{a_0 + b_0} \cdots t_n^{a_n + b_n},
\end{multline*}
and similarly
\begin{multline*}
(-1)^{j'} \Phi_t^A(x_1, \ldots, \widehat{x_{j'}}, \ldots, x_{d+1}) \cdot \Phi_t^A(x_{j'}, y_1, \ldots, y_{d-1}) \\
= (-1)^{j'} \Phi(\overline{x_1}, \ldots, \widehat{\overline{x_{j'}}}, \ldots, \overline{x_{d+1}}) \cdot \Phi(\overline{x_{j'}}, \overline{y_1}, \ldots, \overline{y_{d-1}})t_0^{a_0 + b_0} \cdots t_n^{a_n + b_n},
\end{multline*}
Since $\overline{x_j} = \overline{x_{j'}}$ and $\Phi$ is alternating, these two expressions are equal up to a sign.  This sign is $(-1)^{j+j'}$ times the sign of the unique permutation that identifies $(\overline{x_1}, \ldots, \widehat{\overline{x_j}}, \ldots, \overline{x_{d+1}})$ with $(\overline{x_1}, \ldots, \widehat{\overline{x_{j'}}}, \ldots, \overline{x_{d+1}})$, which is $(-1)^{j+j' + 1}$.

Now consider the case that $X$ contains at most a single element in each fibre. We then have
\begin{multline*}
(-1)^i \Phi_t^A(x_1, \ldots, \widehat{x_i}, \ldots, x_{d+1}) \cdot \Phi_t^A(x_i, y_1, \ldots, y_{d-1}) \\
= (-1)^i \Phi(\overline{x_1}, \ldots, \widehat{\overline{x_i}}, \ldots, \overline{x_{d+1}}) \cdot \Phi(\overline{x_i}, \overline{y_1}, \ldots, \overline{y_{d-1}})t_0^{a_0 + b_0} \cdots t_n^{a_n + b_n}.
\end{multline*}
The monomial in the $t$s is a constant independent of $i$, and so the oriented tropical Pl\"ucker relation at $(X,Y)$ is satisfied by $\Phi_t^A$ because it is satisfied by $\Phi$ at $(\overline{X},\overline{Y})$.
\end{proof}

As an immediate consequence, if $\alpha$ and $\beta$ are injective maps $E \to F$, then the induced maps $\alpha_*, \beta_*: \Mac(d,E) \to \Mac(d,F)$ are homotopic.  Moreover, in the special case of a countably infinite ground set, we have:

\begin{corollary}
If $\alpha, \beta: E \to \mathbb{N}$ are injective maps each with infinite complement, then $\alpha_*$ and
$\beta_*$ are homotopic.
\end{corollary}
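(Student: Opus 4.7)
My plan is to reduce to Proposition~\ref{prop:matroid-sliding} in the case $n=1$, which produces a homotopy between $(\alpha_0)_*$ and $(\alpha_1)_*$ whenever the two injections have disjoint images. Given arbitrary $\alpha, \beta: E \to \mathbb{N}$ with infinite complements, it therefore suffices to exhibit a short chain of injections $\alpha = \gamma_0, \gamma_1, \ldots, \gamma_N = \beta$ in which every pair of consecutive members has disjoint images, and then concatenate the resulting homotopies.

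To produce such a chain, I would split into two cases according to whether the set $\mathbb{N} \setminus (\mathrm{Im}(\alpha) \cup \mathrm{Im}(\beta))$ admits an injection from $E$. If it does, pick any such injection $\gamma$; both $(\alpha,\gamma)$ and $(\gamma,\beta)$ have disjoint images, so the proposition yields $\alpha_* \simeq \gamma_* \simeq \beta_*$. Otherwise $\mathbb{N}\setminus(\mathrm{Im}(\alpha)\cup\mathrm{Im}(\beta))$ is too small (in particular, this case can only arise when $E$ is countably infinite); but since $\mathbb{N}\setminus \mathrm{Im}(\alpha)$ is infinite and decomposes as
\[
    \bigl(\mathrm{Im}(\beta)\setminus\mathrm{Im}(\alpha)\bigr) \;\sqcup\; \bigl(\mathbb{N}\setminus(\mathrm{Im}(\alpha)\cup\mathrm{Im}(\beta))\bigr),
\]
the first piece $\mathrm{Im}(\beta)\setminus\mathrm{Im}(\alpha)$ must be infinite and hence contains an injective image of $E$. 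I would then pick $\gamma_1$ mapping into $\mathrm{Im}(\beta)\setminus\mathrm{Im}(\alpha)$ and $\gamma_2$ mapping into the infinite set $\mathbb{N}\setminus\mathrm{Im}(\beta)$. The three pairs $(\alpha,\gamma_1)$, $(\gamma_1,\gamma_2)$, $(\gamma_2,\beta)$ then all have disjoint images---the middle one because $\mathrm{Im}(\gamma_1)\subset\mathrm{Im}(\beta)$ while $\mathrm{Im}(\gamma_2)$ is disjoint from $\mathrm{Im}(\beta)$---so three applications of the proposition chain together to give $\alpha_* \simeq \beta_*$.

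The argument is essentially combinatorial bookkeeping, with no genuine obstacle beyond the case split. The one point worth flagging is that a single intermediate injection disjoint from both $\alpha$ and $\beta$ need not exist, for instance when $\mathrm{Im}(\alpha)$ and $\mathrm{Im}(\beta)$ partition $\mathbb{N}$ into the even and odd integers; it is exactly this phenomenon that forces the two-intermediate branch.
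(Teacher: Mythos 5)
Your proposal is correct and follows essentially the same strategy as the paper's proof: reduce to the disjoint-image case via Proposition~\ref{prop:matroid-sliding}, split on whether $\mathbb{N}\setminus(\mathrm{Im}(\alpha)\cup\mathrm{Im}(\beta))$ is large enough to host an intermediate injection, and otherwise build a length-three chain through intermediate injections into the set differences. The only (harmless) cosmetic difference is your choice of $\gamma_2$ mapping into all of $\mathbb{N}\setminus\mathrm{Im}(\beta)$, where the paper maps into the smaller set $\mathrm{Im}(\alpha)\setminus\mathrm{Im}(\beta)$; both choices satisfy the needed disjointness.
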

\begin{proof}
If $\alpha(E)^c \cap \beta(E)^c$ is infinite, then choose an injective map $\gamma: E \to \alpha(E)^c \cap \beta(E)^c \subset \mathbb{N}$.  By Proposition \ref{prop:matroid-sliding}, both $\alpha_*$ and $\beta_*$ are homotopic to $\gamma_*$.  If the intersection $\alpha(E)^c \cap \beta(E)^c$ is finite, then $\alpha(E) \smallsetminus \beta(E)$ and $\beta(E) \smallsetminus \alpha(E)$ must both be infinite, so we can choose injective maps $\gamma: E \to \alpha(E) \smallsetminus \beta(E)$ and $\delta: E \to \beta(E) \smallsetminus \alpha(E)$.  We then have a sequence of homotopies
\[
	\beta_* \simeq \gamma_* \simeq \delta_* \simeq \alpha.\qedhere
\]
\end{proof}

\section{The simplicial operad of injective maps}

In this section we will construct an operad that acts on the matroid Grassmannians via the slide
moves defined in the previous section, and then we will show that it is in fact an $E_\infty$
operad.  

One of the most familiar examples of an $E_\infty$ operad is the litle discs operad in which the
space of $n$-ary operations is colimit as $d\to \infty$ of the space of configurations of $n$
disjoint $d$-discs in a large disc.  The operad we construct is analogous to this, but instead of little
discs in a big disc, we use infinite subsets of $\mathbb{N}$.

\subsection{Background on operads}

We recall some definitions and results from \cite{May-infinite-loop-spaces,May-group-completion}. An
operad consists of:
\begin{enumerate}
\item A functor $\mathscr{O}$ from the category of finite sets and bijections to
spaces (so the symmetric groupp $\Sigma_A$ acts on each space $\mathscr{O}(A))$.
\item For each map of finite sets $\gamma: B \to A$, a composition map
\[
	\gamma^*: \mathscr{O}(A) \times  \prod_{a \in A} \mathscr{O}(\gamma^{-1}(a)) \to \mathscr{O}(B) 
\]
subject to the conditions below.
\end{enumerate}
The composition maps are required to be associative, unital, and equivariant in the following sense:
\begin{enumerate}
\item (Unital) There is a distinguished element $1 \in \mathscr{O}(\{*\})$ such that for any $x\in
\mathscr{O}(A)$ the composition corresponding to the identity map $\mathrm{id}: A \to A$ satisfies
$\mathrm{id}^*(x,1\ldots, 1) = x$ , and the composition corresponding to the map $\pi: A \to \{*\}$
satisfies $\pi^*(1, x) = x$.

\item (Equivariant) Given $\gamma:B \to A$ and a permutation $\sigma \in \Sigma_B$ that preserves
the relation of elements being in the same fibre over $A$, there are induced automorphisms of
$\mathscr{O}(A)$, $\prod_{a \in A} \mathscr{O}(\gamma^{-1}(a))$, and $\mathscr{O}(B)$, and the
diagram
\[
\begin{tikzcd}
\mathscr{O}(A) \times  \prod_{a \in A} \mathscr{O}(\gamma^{-1}(a))
 \arrow[r,"\gamma^*"] 
 \arrow[d,"\sigma_*"] & \mathscr{O}(B) \arrow[d,"\sigma_*"] \\
\mathscr{O}(A) \times  \prod_{a \in A} \mathscr{O}(\gamma^{-1}(a)) 
\arrow[r,"\gamma^*"]           & \mathscr{O}(B)         
\end{tikzcd}
\]
commutes.

\item (Associative) Given maps $C \stackrel{\tau}{\to} B \stackrel{\gamma}{\to} A$, if we let
$\tau_a$ denote the restriction of $\tau$ to $\gamma^{-1}(a)$, then
\[
	\gamma^* \circ \left(\prod_{a\in A} \tau_a^* \right) = \tau^* \circ \gamma^*.
\]
\end{enumerate}

An action of an operad $\mathscr{O}$ on a space $X$ consists of maps
\[
	\mu_A: \mathscr{O}(A)\times X^A \to X
\]
that are symmetric group equivariant, compatible with the composition maps of $\mathscr{O}$ in the sense that the diagram
\[
\begin{tikzcd}
	\mathscr{O}(A) \times \prod_{a \in A} \mathscr{O}(\gamma^{-1}(a)) \times X^B
	 \arrow[r,"\gamma^* \times \mathrm{id}"] \arrow[d,"\mathrm{id} \times \prod \mu_{\gamma^{-1}(a)}"] & 
	\mathscr{O}(B) \times X^B \arrow[d,"\mu_B"] \\
	 \mathscr{O}(A) \times X^A \arrow[r,"\mu_A"] & X
\end{tikzcd}
\]
commutes, and such that $\mu_{\{*\}}(1,x) = x$ for any $x \in X$.

Working in the category of compactly generated spaces, an \emph{$E_\infty$-operad} is an operad such
that each space $\mathscr{O}(A)$ is contractible and its action of the symmetric group $\Sigma_A$ is
free.  If an $E_\infty$-operad acts on a space $X$, then we may choose a point in the space
$\mathscr{O}(\{1,2\})$ of binary operations and so $X$ becomes a homotopy commutative $H$-space.
The recognition principle \cite[Theorem 2.3]{May-group-completion} asserts that if $X$ is group-like
(meaning that $\pi_0(X)$ is a group), then $X$ is weakly equivalent to an infinite loop space, and
if $X$ is not group like, then its group completion is weakly equivalent to an infinite loop space.

\subsection{The space of injective maps}

Fix a partition of $\mathbb{N}$ into countably many infinite cardinality pieces, 
\[
\mathbb{N} = P_0 \cup P_1 \cup \cdots.
\] 
For example, one can take $P_i$ ($i\geq 1$) to be the set of all powers of the $i^\mathit{th}$
prime, and $P_0 = (P_1 \cup P_2 \cup \cdots)^c$.  The particular choice is not important, as
different choices will yield isomorphic results. Given a finite or countably infinite set $A$, let
$\Inj(A)$ denote the simplicial complex where vertices are injective maps $\alpha: A \hookrightarrow
\mathbb{N}$ such that the image of $\alpha$ is contained in a single piece $P_i$, and $\{\alpha_0,
\ldots, \alpha_n\}$ span an $n$-simplex if their images are disjoint.  An arbitrary point of
$\Inj(A)$ can be represented as a formal finite convex sum of vertices,
\[
	\sum_{i=0}^n t_i \alpha_i
\]
where $t_i \in (0,1)$ subject to $\sum t_i = 1$, and the $\alpha_i$ span an $n$-simplex.

\begin{lemma}\label{lem:contractible}
The space $\Inj(A)$ is contractible.
\end{lemma}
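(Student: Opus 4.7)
The plan is to prove $\Inj(A)$ is contractible by a compactness-plus-cone argument: show that every finite subcomplex of $\Inj(A)$ is contained in a simplicial cone that is itself a subcomplex. Since $\Inj(A)$ is a simplicial complex and therefore a CW complex, every continuous map from a sphere $S^n$ factors through a finite subcomplex, so containment in a contractible cone supplies a nullhomotopy. This forces $\pi_n(\Inj(A))=0$ for all $n\geq 0$, and Whitehead's theorem then delivers contractibility. Path connectedness ($\pi_0=0$) follows from the same mechanism applied to pairs of vertices.

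The cone construction exploits the hypothesis that $\mathbb{N}$ was partitioned into infinitely many pieces $P_0, P_1, \ldots$, each of infinite cardinality. Given a finite subcomplex $K$ with vertex set $\{\alpha_0, \ldots, \alpha_m\}$, each $\alpha_i$ has image inside a single piece $P_{k_i}$, so only finitely many pieces are used by $K$. I pick any index $j \notin \{k_0, \ldots, k_m\}$ and any injection $\beta: A \hookrightarrow P_j$; such a $\beta$ exists because $P_j$ is countably infinite and $A$ is at most countable. Then $\beta$ is a vertex of $\Inj(A)$ whose image is disjoint from every $\alpha_i(A)$.

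To conclude, I verify that $\beta$ is the apex of a simplicial cone over $K$ inside $\Inj(A)$. Simplices of $\Inj(A)$ are by definition finite collections of vertices with pairwise disjoint images, so for any simplex $\sigma\subset K$ the augmented set $\sigma\cup\{\beta\}$ is again a simplex of $\Inj(A)$. Hence the join $K * \{\beta\}$ is a bona fide subcomplex of $\Inj(A)$ and topologically a cone, so contractible; the inclusion $K\hookrightarrow K*\{\beta\}$ nullhomotopes any sphere mapped into $K$. The only point requiring any care at all is that the cone apex must exist uniformly whether $A$ is finite or countably infinite, and this is handled by the fact that each $P_j$ is itself countably infinite and at least one $P_j$ is always unused. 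No coherence or convergence issues arise, so there is no real obstacle.
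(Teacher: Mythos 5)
Your argument is correct and is essentially identical to the paper's proof: both reduce to showing homotopy groups vanish via the CW structure and Whitehead's theorem, use compactness of $S^n$ to land in a finite subcomplex $K$, exploit the infinitude of partition pieces to select a vertex $\beta$ with image in an unused piece $P_j$ and hence disjoint from every vertex of $K$, and conclude that $K$ cones off inside $\Inj(A)$ with apex $\beta$. The only cosmetic difference is that the paper picks $M$ larger than all indices in use whereas you pick any unused index, which are interchangeable choices.
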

\begin{proof}
Since $\Inj(A)$ is a CW complex, it suffices to show that all homotopy groups are trivial.  Since
the sphere $S^n$ is compact, the image of any map $f: S^n \to \Inj(A)$ is contained in a subcomplex
$K$ spanned by a finite set $\{\alpha_1, \ldots, \alpha_N\}$ of vertices.  Each $\alpha_i$ has image
contained in some $P_{j_i}$.  Choose $M$ large enough so that $M > j_i$ for $i = 1,\ldots, N$, and
choose an injective map $\beta: A \to P_M$. Then the image of $\beta$ is disjoint from any of the
$\alpha_i$ appearing as vertices of $K$.  Thus if $\sigma$ is a simplex of $K$, then $\sigma\cup
\{\beta\}$ is also a simplex of $K$.  Hence the cone on $K$ includes into $\Inj(A)$ by sending the
cone apex to the vertex $\beta$. It follows that 
\[
f: S^n \to K \hookrightarrow \mathrm{Cone}(K) \hookrightarrow \Inj(A)
\]
is null-homotopic. 
\end{proof}

\subsection{An E-infinity operad}
Now we construct an operad from the spaces $\Inj(A)$. For a finite set $A$, define
\[
\Operad(A) = \begin{cases}
\Inj(A \times \mathbb{N}) & |A| \geq 2,\\
\{\text{the bijection } \{a\} \times \mathbb{N} \to \mathbb{N} \text{ given by } (a,n) \mapsto n\} & A = \{a\}.
\end{cases}
\]
 Given a map of finite sets $\gamma: B\to A$, the operad composition map
\begin{equation}\label{eq:operad-composition}
	\gamma^*: \Operad(A) \times \prod_{a \in A} \Operad(\gamma^{-1}(a))  \to \Operad(B)
\end{equation}
is constructed as follows. 

We first describe \eqref{eq:operad-composition} at the level of vertices. Let $\alpha: A \times
\mathbb{N} \to \mathbb{N}$ be a vertex of $\Operad(A)$, and for each $a\in A$ let $\beta_a:
\gamma^{-1}(a) \times \mathbb{N} \to \mathbb{N}$ be a vertex of $\Operad(\gamma^{-1}(a))$. 
We can think of $\alpha$  as a collection of injective maps $\alpha_a: \mathbb{N}
\to \mathbb{N}$ for $a\in A$ (such that if $|A|\geq 2$ then their images are all disjoint and lie
in the same piece $P_i$ of the partition of $\mathbb{N}$).   The collection $\{\beta_a\}_{a \in A}$
can be represented as a family of injective maps $\{\beta_b: \mathbb{N} \to \mathbb{N} \}_{b \in B}$
such that if $\gamma(b) = \gamma(b')$ then the images of $\beta_b$ and $\beta_{b'}$ are disjoint but
contained in the same $P_i$.  The operad composition map \eqref{eq:operad-composition} then sends
\[
	\big((\alpha_a)_{a\in A}, (\beta_b)_{b \in B}\big) \mapsto (\alpha_{\gamma(b)} \circ \beta_b)_{b\in B},
\]
which represents a vertex of the space $\Operad(B)$.

Having defined the composition map $\gamma^*$ at the level of vertices, we now extend it linearly to
products of higher dimensional simplices.  Note that if $\alpha^0, \alpha^1, \dots, \alpha^n$  are vertices of
$\Operad(A)$ that have disjoint images, then for any $b\in B$ the compositions 
\[
\left( \alpha^i_{\gamma(b)} \circ \beta_b \right)_{i=0,\dots,n} 
\]
have disjoint images.  Likewise, if $(\beta^i_a)_{i=0,\ldots, n}$ are vertices of
$\Operad(\gamma^{-1}(a))$ that span an $n$-simplex, then for any $b\in B$ the corresponding
compositions $\left( \alpha_{\gamma(b)} \circ \beta^i_b \right)_{i=1,\ldots, n}$  have disjoint
images.  It follows that $\gamma^*$ extends linearly from the 0-skeleton of $\Operad(A) \times
\prod_{a \in A} \Operad(\gamma^{-1}(a))$ to all of it.

\begin{proposition}
The spaces $\Operad(A)$ form an $E_\infty$ operad.
\end{proposition}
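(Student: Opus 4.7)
The plan is to verify three things: that $\Operad$ is a well-defined operad, that each space $\Operad(A)$ is contractible, and that the $\Sigma_A$-action on each $\Operad(A)$ is free. Contractibility is essentially immediate: for $|A| \geq 2$ one has $\Operad(A) = \Inj(A \times \mathbb{N})$, which is contractible by Lemma \ref{lem:contractible}, while for $|A| \leq 1$ the space $\Operad(A)$ is a single point by definition.

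First I would verify that the composition maps $\gamma^*$ land in $\Operad(B)$. At the level of vertices, given $\alpha$ and $(\beta_a)_{a \in A}$, the composite $(b,n) \mapsto \alpha_{\gamma(b)}(\beta_b(n))$ is injective (each $\alpha_a$ and each $\beta_b$ is injective, and the $\alpha_a$'s have pairwise disjoint images), and when $|B| \geq 2$ its image lies inside $\mathrm{Im}(\alpha) \subset P_i$ for a single piece $P_i$, preserving the single-piece condition. The text has already recorded that the simplicial span condition is preserved by the composition, so linear extension yields a well-defined map on the whole product. The operad axioms (unitality, associativity, and equivariance) then reduce to identities at the vertex level, each following from the analogous identity for ordinary function composition, and extend linearly to the full product.

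The main obstacle is verifying that the $\Sigma_A$-action is free. $\Sigma_A$ acts on $\Operad(A) = \Inj(A \times \mathbb{N})$ by precomposition with $\sigma \times \id_{\mathbb{N}}$. For any nontrivial $\sigma$ and any vertex $\alpha$, the vertices $\alpha$ and $\sigma \cdot \alpha$ have exactly the same image in $\mathbb{N}$ (since $\sigma \times \id_{\mathbb{N}}$ is a bijection of $A \times \mathbb{N}$), and therefore they cannot both occur as vertices of any simplex of $\Inj(A \times \mathbb{N})$ of positive dimension, whose vertices must have pairwise disjoint images. It follows that if $\sigma$ permutes the vertex set of some simplex, it must fix each vertex individually; but $\sigma \cdot \alpha = \alpha$ forces $\alpha(\sigma(a), n) = \alpha(a, n)$ for all $(a,n)$, contradicting the injectivity of $\alpha$ whenever $\sigma(a) \neq a$ for some $a$. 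Hence no nontrivial element of $\Sigma_A$ fixes any simplex of $\Inj(A \times \mathbb{N})$ setwise, which for a simplicial complex acted on by simplicial automorphisms is equivalent to freeness of the induced action on the geometric realization.
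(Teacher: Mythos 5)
Your proof is correct and takes essentially the same approach as the paper, which dispatches the operad axioms as routine, cites Lemma \ref{lem:contractible} for contractibility, and declares freeness "immediate from the definition." The one place where you go meaningfully beyond the paper is the freeness argument: your observation that $\alpha$ and $\sigma\cdot\alpha$ always share the same image (so cannot cohabit a simplex unless equal), combined with the injectivity contradiction when $\sigma\cdot\alpha=\alpha$, is exactly the content hidden behind the paper's "immediate," and your reduction from freeness on the geometric realization to the non-existence of setwise-stabilized simplices is the correct criterion for simplicial actions.
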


\begin{proof}
It is entirely straightforward to check that the operad composition maps constructed above are
associative, unital, and equivariant.  It is immediate from the definition that the symmetric group
actions are free, and contractibility of $\Operad(A)$ was proved in Lemma \ref{lem:contractible}.
\end{proof}

\subsection{The operad action on the space of oriented valuated matroids}

\begin{theorem}
The operad $\Operad$ acts on the space $\Mac = \coprod_d \Mac(d,\mathbb{N})$. 
\end{theorem}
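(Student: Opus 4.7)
The plan is to define the action $\mu_A \co \Operad(A) \times \Mac^A \to \Mac$ by the matroid sliding construction of Proposition \ref{prop:matroid-sliding} and then verify the operad axioms. Write a point of $\Operad(A) = \Inj(A \times \mathbb{N})$ as a convex combination $\theta = \sum_{k=0}^n t_k \alpha^k$ of pairwise-disjoint injections $\alpha^k \co A \times \mathbb{N} \hookrightarrow \mathbb{N}$, and given representatives $\Phi_a$ of oriented valuated matroids of ranks $d_a$ on $\mathbb{N}$, form the direct sum $\Phi \defeq \bigoplus_{a \in A} \Phi_a$, which is an oriented tropical Pl\"ucker vector of rank $d = \sum_a d_a$ on the ground set $A \times \mathbb{N}$. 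Set
\[
\mu_A(\theta, (\Phi_a)) \defeq [\Phi^{(\alpha^0, \ldots, \alpha^n)}_t] \in \Mac(d, \mathbb{N}),
\]
where the right-hand side is the sliding construction of Proposition \ref{prop:matroid-sliding}, which guarantees the output is an oriented tropical Pl\"ucker vector on $\mathbb{N}$.

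Well-definedness and continuity are quick to dispatch. The sliding formula is polynomial in the $t_k$ and linear in the entries of $\Phi$, so $\mu_A$ is continuous. If a coordinate $t_k$ vanishes, every monomial $\prod_\ell t_\ell^{b_\ell}$ with $b_k > 0$ vanishes, while those with $b_k = 0$ are unaffected by deleting the vertex $\alpha^k$; hence the formula is compatible with the face relations and independent of the particular convex-combination representation of $\theta$.

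I would then verify the three operad axioms. The unit axiom is immediate: the distinguished element of $\Operad(\{*\})$ is the canonical bijection $\{*\} \times \mathbb{N} \to \mathbb{N}$, and applying the sliding formula with a single vertex at barycentric coordinate $1$ only relabels the ground set, so $\mu_{\{*\}}(1,\Phi)=\Phi$. Equivariance follows because a permutation $\sigma \in \Sigma_A$ acts compatibly on $\Operad(A)$ by precomposition with $\sigma \times \id_{\mathbb{N}}$ and on $\Mac^A$ by permuting factors; the direct sum $\bigoplus_a \Phi_a$ is invariant under the combined action up to an overall sign coming from the alternating property, and this sign is absorbed by the $\mathbb{R}^\times$-quotient defining $\Mac$.

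The main obstacle is the associativity axiom. Given $\gamma \co B \to A$, elements $\theta = \sum_i s_i \alpha^i$ of $\Operad(A)$, $\theta_a = \sum_j t^a_j \beta^{a,j}$ of $\Operad(\gamma^{-1}(a))$, and a tuple $(\Phi_b)_{b \in B}$, I must verify that applying $\mu$ in either order yields the same element of $\Mac$. In the fully discrete case where every simplex collapses to a single vertex, both sides produce $\bigoplus_b (\alpha^0_{\gamma(b)} \circ \beta^{\gamma(b),0}_b)_* \Phi_b$, so the statement is immediate. In general, expanding the linearly extended composition $\gamma^*(\theta, (\theta_a))$ yields a convex sum of composite vertices $\kappa^{i,\mathbf{j}}$ with coefficients $s_i \prod_a t^a_{j_a}$, and associativity reduces to matching the monomial that the sliding formula attaches to a basis tuple: the exponent of each $t^a_j$ is the number of entries of that basis landing in the image of $\beta^{a,j}$, and the exponent of each $s_i$ factors out of the inner slides because it is constant across the entries of a given direct-sum block of $\bigoplus_a \Phi_a$. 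This is careful combinatorial bookkeeping rather than a deep obstruction, but it is where the bulk of the verification lies, since it uses the specific multiplicative structure of the sliding formula to line up with the convex-combination structure of the operad composition.
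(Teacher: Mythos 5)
Your construction of the action maps is identical to the paper's: form the matroid direct sum $\bigoplus_a \Phi_a$ on ground set $A\times\mathbb{N}$, then apply the sliding formula $\Phi^A_t$ of Proposition~\ref{prop:matroid-sliding} to land in $\Mac(d,\mathbb{N})$, and the paper's proof in fact consists of exactly this two-step construction with no explicit verification of the operad-action axioms. Your added sketches of the unit, equivariance (sign absorbed by the $\mathbb{R}^\times$-quotient), and associativity checks are sound and correctly locate the real work in the associativity axiom, where the $t$-monomials produced by sliding must be matched against the multilinear extension of $\gamma^*$; this goes beyond what the paper writes down but does not diverge from its approach.
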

\begin{proof}
We must give action maps  
\[
	\mu_A: \Operad(A) \times \prod_{a \in A} \Mac(d_a,\mathbb{N}) \to \Mac\left(d, \mathbb{N}\right), 	
\] 
where $d = \sum_{a \in A } d_a$. We construct these maps in two steps. First we apply the matroid
direct sum map
\[
	\prod_{a \in A} \Mac(d_a,\mathbb{N}) \to \Mac\left(d, \coprod_{a \in A} \mathbb{N}\right) = \Mac\left(d, A \times \mathbb{N}\right).
\]
Then we must define a map
\[
	\Operad(A) \times \Mac\left(d, A \times \mathbb{N}\right) \to \Mac\left(d, \mathbb{N}\right).
\]
A point of the left hand side consists of:
\begin{itemize}
\item a formal linear combination $\sum_i t_i \alpha_i$ where $t_i \in (0,1)$ are numbers such that $\sum_i t_i = 1$, and the $\alpha_i$ are disjoint injective maps $A\times \mathbb{N} \hookrightarrow \mathbb{N}$,
\item an oriented valuated matroid $\Phi$ on ground set $A\times \mathbb{N}$.
\end{itemize}
We send the pair $(\sum_i t_i \alpha_i, \Phi)$ to the oriented valuated matroid $\Phi^{(\alpha_0,
\ldots, \alpha_n)}_{(t_0, \ldots, t_n)}$ defined via matroid sliding (Proposition
\ref{prop:matroid-sliding}).
\end{proof}

The group of connected components of $\Mac$ is the additive monoid $\mathbb{N}$, and so we have that the group completion $\Omega B \Mac$ is weakly equivalent to an infinite loop space with group of connected components $\mathbb{Z}$.

\bibliographystyle{amsalpha}
\bibliography{bibliography}

\end{document}